\pdfminorversion=4
\documentclass[conference, onecolumn,11pt]{IEEEtran}

\IEEEoverridecommandlockouts


\usepackage{amsmath,amsthm,amssymb}
\usepackage{subcaption}
\usepackage{tikz}
\usetikzlibrary{fadings}
\tikzfading[name=fade out,
            inner color=transparent!20,
            outer color=transparent!90]
\tikzfading[name=fade equal,
            inner color=transparent!30,
            outer color=transparent!30]
\usetikzlibrary{circuits.ee.IEC}
\usepackage[american]{circuitikz}
\usetikzlibrary{calc}
\usepackage{color}

\usepackage[obeyFinal]{todonotes}
\usepackage{mathtools, amssymb}
\allowdisplaybreaks
\usepackage{url}

\usepackage[boxruled]{algorithm2e}

\usepackage[font=footnotesize]{caption}

\usepackage{enumitem}
\setlist[enumerate, 1]{topsep=-0.5ex}
\setlist[itemize, 1]{topsep=-0.5ex}

\newlist{assump}{enumerate}{1}
\setlist[assump]{label = {\bf (A\arabic*)}, resume}
\newlist{hyp}{enumerate}{1}
\setlist[hyp]{label = {\bf (H\arabic*)}, resume}
\newlist{lyap}{enumerate}{1}
\setlist[lyap]{label = {\bf (L\arabic*)}, resume}
\newlist{state}{enumerate}{1}
\setlist[state]{label = {\bf (S\arabic*)}, resume}
\newlist{myprob}{enumerate}{1}
\setlist[myprob]{label = {\bf Problem~\arabic*.}, resume}

\theoremstyle{plain}
\newtheorem{thm}{Theorem}
\newtheorem{prop}[thm]{Proposition}
\newtheorem{lem}[thm]{Lemma}

\theoremstyle{remark}
\newtheorem{rem}[thm]{Remark}
\theoremstyle{definition}
\newtheorem{defn}{Definition}
\newtheorem{exam}{Example}
\newtheorem{assmpt}{Assumption}

\renewcommand{\geq}{\geqslant}
\renewcommand{\ge}{\geqslant}
\renewcommand{\leq}{\leqslant}
\renewcommand{\le}{\leqslant}

\newcommand{\B}{\mathbb{B}}

\newcommand{\N}{\mathbb{N}^\ast}
\newcommand{\Nz}{\mathbb{N}}

\newcommand{\R}{\mathbb{R}}

\newcommand{\cA}{\mathcal{A}}

\newcommand{\cC}{\mathcal{C}}

\newcommand{\cI}{\mathcal{I}}

\newcommand{\cK}{\mathcal{K}}

\newcommand{\cN}{\mathcal{N}}

\newcommand{\cP}{\mathcal{P}}

\newcommand{\cR}{\mathcal{R}}

\newcommand{\cT}{\mathcal{T}}
\newcommand{\cU}{\mathcal{U}}

\newcommand{\argmin}{\operatornamewithlimits{arg\,min}}

\DeclareMathOperator{\dom}{dom}

\DeclareMathOperator{\inn}{int}
\DeclareMathOperator{\bd}{bd}

\DeclareMathOperator{\proj}{proj}

\DeclareMathOperator{\cl}{cl}

\DeclareMathOperator{\lcp}{LCP}
\DeclareMathOperator{\lccp}{LCCP}

\newcommand{\eps}{\varepsilon}

\newcommand{\wt}{\widetilde}
\newcommand{\wh}{\widehat}

\colorlet{Darkred}{red!50!black}
\colorlet{Darkgreen}{green!50!black}
\xdefinecolor{UIUCblue}{rgb}{0,0.24,0.49} 
\xdefinecolor{UIUCorange}{rgb}{0.96,0.5,0.14} 

\usepackage{color}

\title
{\huge Cone-Copositive Lyapunov Functions for Complementarity Systems: Converse Result and Polynomial Approximation}
\author{Marianne Souaiby \and Aneel Tanwani \and Didier Henrion
\thanks{The authors are with LAAS-CNRS, Universit\'e de Toulouse, 31400 Toulouse, France. D. Henrion is also with the Faculty of Electrical Engineering of the Czech Technical University in Prague, Czechia. This work is sponsored by the ANR project {\sc ConVan} with grant number ANR-17-CE40-0019-01.}
\thanks{This manuscript is a preprint of the article accepted for publication in IEEE Transactions on Automatic Control, and is available on publisher's website via IEEE Early Access. It is scheduled to appear in printed version in March 2022.}
}

\begin{document}
\maketitle

\begin{abstract}
This article establishes the existence of Lyapunov functions for analyzing the stability of a class of state-constrained systems, and it describes algorithms for their numerical computation. The system model consists of a differential equation coupled with a set-valued relation which introduces discontinuities in the vector field at the boundaries of the constraint set. In particular, the set-valued relation is described by the subdifferential of the indicator function of a closed convex cone, which results in a cone-complementarity system. The question of analyzing stability of such systems is addressed by constructing cone-copositive Lyapunov functions. As a first analytical result, we show that exponentially stable complementarity systems always admit a continuously differentiable cone-copositive Lyapunov function. Putting some more structure on the system vector field, such as homogeneity, we can show that the aforementioned functions can be approximated by a rational function of cone-copositive homogeneous polynomials. This later class of functions is seen to be particularly amenable for numerical computation as we provide two types of algorithms for precisely that purpose. These algorithms consist of a hierarchy of either linear or semidefinite optimization problems for computing the desired cone-copositive Lyapunov function. Some examples are given to illustrate our approach.
\end{abstract}

 \begin{IEEEkeywords}
Constrained systems; hybrid systems; converse Lyapunov theorem; sums-of-squares optimization.
 \end{IEEEkeywords}


\section{Introduction}\label{sec:Intro}
Lyapunov functions provide a useful tool for the stability analysis of dynamical systems. Several advances have been made on the theoretical side to establish existence of Lyapunov functions for various classes of dynamical systems, see e.g. \cite{Khalil02, Libe03, GoebSanf12} for examples of standard expositions. The fundamental question in most of these works boils down to checking the positivity of certain functions over the state space, which is a challenging problem numerically \cite{MurtKaba87}. Modern developments in the field of real algebraic geometry \cite{Puti93, Schm91} provide certificates of positivity of (polynomial) functions with Positivstellens\"atze relying on {\em sums-of-squares} (SOS) decompositions. Since it has been observed in \cite{Powers98} that finding SOS decompositions is equivalent to semidefinite programming (SDP) or linear matrix inequalities (LMI), numerical tools based on SOS optimization have been developed extensively over the past two decades to compute Lyapunov functions, see e.g. \cite{Parr00, PrajPapa02, HenrGaru05, Chesi09}.

Stability analysis of hybrid, or nonsmooth dynamical systems, where the vector field is set-valued with possible discontinuities, is of particular relevance with respect to several applications.  Naturally, Lyapunov functions for such systems provide a potent tool for studying stability related properties as well. 
%
%
When the system is modeled by switching vector fields over the whole state space, then the construction of Lyapunov functions using SOS is studied in \cite{PapaPraj09, AhmPar17, AhmaJung18}. However, we are concerned with a certain class of differential inclusions which is useful in modeling systems with state constraints, where the vector field exhibits discontinuous behaviour on the boundary of the constraints so that the state trajectory is forced to evolve within the prespecified set. In the literature, there are several frameworks for modeling this behaviour, such as {sweeping processes}, {projected dynamical systems} or {\em complementarity systems} \cite{Brog03}, \cite{BrogTanw20}, \cite{CamlPang06}. The relevance of these systems is seen in many practical systems encountered in engineering, physics and biology. For example, in mechanics, the interaction between multiple rigid bodies or between a rigid body and the environment can be modelled using nonsmooth force laws for contact, impact and friction. Some variants of these systems are also studied in \cite{TanwBrog14, TanwBrog18} in the context of control-theoretic problems.

The stability analysis of complementarity systems using Lyapunov functions has received some attention in the literature. Since the state of such systems essentially evolves in a closed convex cone, often chosen to be the positive orthant, it is naturally desirable to consider Lyapunov functions which are positive definite over the positive orthant; the functions satisfying this latter property are called {\em copositive} functions. The need to search such functions for stability analysis of complementarity systems was presented as an open problem in \cite{CamlSchu04}. The papers \cite{GoelMotr03, GoelBrog04, CamlPang06} investigate sufficient stability conditions for linear complementarity systems, or conewise linear systems \cite{IanIerVas19} in terms of copositive Lyapunov functions. The paper \cite{GoelBrog04} also provides examples of systems where a positive definite Lyapunov function does not exist, but the system is nonetheless asymptotically stable and it admits a copositive Lyapunov function.

While these existing works have shown the utility of enlarging the search space of Lyapunov functions from positive definite to copositive functions, and cone-copositive functions when considering systems with state trajectories constrained to a cone rather than the positive orthant, none of the existing works has addressed the converse question:
\begin{center}
Does every asymptotically stable complementarity system admit a cone-copositive Lyapunov function?
\end{center}
The first objective of this paper is to answer this question in the affirmative by constructing a Lyapunov function as a functional of the solution trajectories, thereby concluding that one does not need to go beyond cone-copositive functions to find Lyapunov functions for complementarity systems. By putting more structure on the system dynamics, and using the appropriate density results, we are able to prove the existence of a cone-copositive Lyapunov function which can be expressed as a ratio of homogeneous polynomials. Converse stability results for dynamical systems have been studied for a long time in control community, see the recent survey article \cite{Kell15}. Moreover, due to discontinuities in the vector field at the boundary of the constraint set (which can be seen as an example of constrained switching), establishing the existence of Lyapunov functions within cone-copositive functions becomes difficult.

The second objective of this paper is to propose computationally tractable algorithms for finding the Lyapunov functions. The interesting aspect of our problem lies in computing Lyapunov functions which satisfy certain inequalities over a given set. For example, in linear complementarity systems, one needs to check the positivity of a function over the positive orthant only, and if the function we seek is of the form $x^\top P x$, then finding such a function boils down to finding a {\em copositive} matrix $P$ that satisfies certain inequalities. However, checking whether a given matrix is copositive is an NP-hard problem \cite{BomzDur00}. The papers \cite{BunDur08, BunDur09, NiYaZha18}, \cite{Dur10} propose algorithms for detecting copositivity of a matrix or tensor. Moreover, we will show with the help of an example that, even in the case of linear complementarity systems, such functions cannot be computed by solving a linear set of equations, as is done for unconstrained linear systems. Another challenging aspect of these problems is that, when dealing with conic constraints which are unbounded sets, there are no readily available {Positivstellensatz} that guarantee SOS decompositions of a positive polynomial over the sets of our interest. The field of copositive programming has been active area of research over the past decade which addresses some of these challenges \cite{Bomz12}. In computing the Lyapunov functions for complementarity systems which evolve on unbounded cones with positivity constraints, we are faced with similar challenges. 

Motivated by such questions, we propose two approaches for computing homogeneous cone-copositive Lyapunov functions numerically. The first one is a {\em discretization method} which is based on finding an inner approximation of the cone of cone-copositive polynomials by using simplicial partitions and evaluating inequalities over a set of points taken on the simplex. It is shown that, as the partition gets finer, we can approximate any cone-copositive polynomial function. The second approach is an {\em SOS method} where we show that the positivity of polynomial over the given cone can be checked by expressing it as an SOS function. By increasing the degree of the approximating SOS polynomial, we again obtain a hierarchy of SDP problems to compute the desired Lyapunov function. Then, we derive the corresponding algorithms for those two techniques, which can be seen as an adaptation of tools available in the literature on polynomial optimization. The illustration of some academic examples is provided using standard Matlab toolboxes.



\section{System Class}\label{sec:prelim}
We begin this section by introducing some basic notions from convex analysis which will be used for describing the class of dynamical systems studied in this paper.
\subsection{Dynamical System with Constrained Trajectories}
We are interested in studying a class of dynamical systems described by the variational inequalities
\begin{equation}\label{diffnormalincl}
    \dot x(t) \in f(x(t)) - \cN_{S}(x(t)), \enspace \text{a.e.} \enspace t \ge 0,
\end{equation}
where $f:\R^n \to \R^n$ is a given vector field, $x(t) \in \R^n$ denotes the state, $S$ is a given closed convex subset of $\R^n$ containing the origin.
The normal cone to $S$ at $x$ is defined by
\begin{equation}\label{eq:defnormalcone}
    \cN _{S}(x):=\left\{\lambda \in \R^n \:\vert\: \left\langle \lambda, x'-x\right\rangle \le 0, \forall x' \in S \right\}.
\end{equation}
If $x \in \inn(S)$, the interior of $S$, then $\cN _{S}(x)={0}$ and by convention, we let $\cN _{S}(x):= \emptyset $ for all $x \not\in S$.

The differential inclusion \eqref{diffnormalincl} is a particular case of the following class of variational inequalities
\begin{equation}\label{diffincl}
\dot x(t) \in f(x(t)) - \partial \varphi (x(t)), \enspace \text{a.e.} \enspace t \ge 0
\end{equation}
where $\varphi : \R^n \to \R \cup \left\{ +\infty \right\}$ is a proper, convex and lower semicontinuous function, and the subdifferential of $\varphi$ at $x \in \R^n$ is defined as
$\partial \varphi (x) := \left\{ \lambda \in \R^n \:\vert\: \left\langle \lambda, z-x \right\rangle \le \varphi(z)-\varphi(x), \forall z \in \dom (\varphi) \right\}$, 
with $\dom (\varphi) := \left\{x\in \R^n \:\vert\: \varphi(x) < +\infty \right\}$.
Indeed, if one denotes the indicator function of a closed convex set $S \subset \R^n$ by $\psi_S(\cdot)$, that is, $\psi_S(x) = 0$, if $x \in S$ and $\psi_S(x) = +\infty$ if $x \not\in S$, then~\eqref{diffnormalincl} is obtained from \eqref{diffincl} by choosing $\varphi=\psi_{S}$. Inclusion~\eqref{diffnormalincl} captures the class of complementarity systems studied in this paper, but the framework of \eqref{diffincl} is necessary for a broader class of complementarity systems such as the ones studied in \cite{CamlPang06, TanwBrog18}.

The formalism of system \eqref{diffnormalincl} with inclusion naturally allows us to describe dynamics constrained to evolve in set $S$. Using the depiction in Figure~\ref{fig:setMap}, it is seen that, during the evolution of a trajectory, if $x(t)$ is in interior of $S$, then $\cN_{S}(x(t)) = 0$ and the motion of the trajectory continues according to the differential equation $\dot x(t) = f(x(t))$. While $x(t)$ is at the boundary, we add a vector from the set $-\cN_{S}(x(t))$, which restricts the motion of the state trajectory in tangential direction on the boundary of the constraint set $S$.

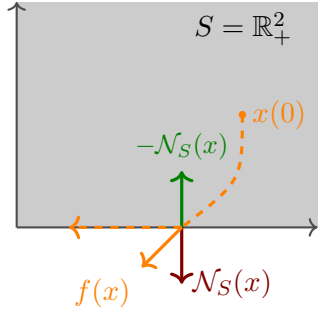
\begin{figure}
\centering
\begin{tikzpicture}
\def\myLength{72}
\def\myWidth{18}
\def\myAngle{30}
\def\horiz{0.867*\myLength}
\def\vert{0*\myLength}
\coordinate (start) at (3,1.5);
\coordinate (origin) at (0,0);
\fill [{black!20}] (0,0)--(0,3)--(4,3)--(4,0)--(0,0);
\draw [thick, black!70,->] (0,0) -- (0,3);
\draw [thick, black!70,->] (0,0) -- (4,0);
\draw (4,3) node[anchor=north east, text width=1.5cm,align=left] {$S = \R^2_+$};
\draw [Darkred, very thick, ->] (\horiz pt, \vert pt) -- +(270: 0.75cm) node[anchor=west] {$\cN_{S}(x)$};
\draw [orange, very thick, ->] (\horiz pt, \vert pt) -- +(225: 0.75cm) node[anchor=north east] {$f(x)$};
\draw [Darkgreen, very thick, ->] (\horiz pt, \vert pt) -- +(90: 0.75cm) node[anchor=south] {\small $-\cN_{S}(x)$};
\draw [orange, very thick,dashed, ->] (\horiz pt, \vert pt) -- +(180: 1.5cm) node[anchor=west]{};
\draw (start) node[orange, circle,fill,inner sep=1] {} node[anchor=west, orange] {$x(0)$};
\draw [orange, rounded corners,very thick,dashed] (start) .. controls +(down:25pt) .. (\horiz pt,\vert pt) ;
\end{tikzpicture}
\caption{State trajectories in constrained system with $S = \R_+^n$.}
\vskip -2em
\label{fig:setMap}
\end{figure}

\subsection{Complementarity Systems}

In this article, we focus on the particular class of constrained systems where the admissible set $S$ is a cone, denoted by $K$. Hence for each $x \in K$, we have $\lambda x \in K$ for each $\lambda \in \R_{\ge 0}$, and for all $\alpha,\beta \in \R_{\ge 0}$ and $x,y \in K$, we have $\alpha x + \beta y \in K$. The dual cone $K^\star \subset \R^n$ of a cone $K \subset \R^n$ is defined as
\begin{equation}\label{set:dualcone}
    K^\star:=\left\{p \in \R^n \:\vert\: \left\langle p,v \right\rangle \ge 0, \forall \, v \in K \right\}.
\end{equation}
We recall a basic result from convex analysis \cite[Proposition~1.1.3]{FaccPang03}:
\[\eta \in -\cN_K(x) \Longleftrightarrow K^\star \ni \eta \perp x \in K\]
where the notation $K^\star \ni \eta \perp x \in K$ is the short-hand for three statements: i) $x \in K$, ii) $\eta \in K^\star$, and iii) $x^\top \eta = 0$.
With these basic definitions, we introduce the following class of systems for which we develop the main results of this paper.
\begin{defn}[Complementarity System]
Given a function $f : \R^n \to \R^n$ and a cone $K \subset \R^n$, a complementarity system is described by the following differential equation:
\begin{equation}
\tag{C-Sys}
\begin{array}{c}
\dot x = f(x) + \eta \\ 
K^\star \ni \eta \perp x \in K. 
\end{array}
\label{eq:compSys}
\end{equation}
\end{defn}
Several works exist in the literature which deal with existence and numerical construction of the solution to system \eqref{eq:compSys}. A recent reference \cite{CamlIann19} contains results in this direction, along with pointers to earlier works.
Motivated by these works, it is stipulated in the remainder of this paper that the data of \eqref{eq:compSys} satisfy the following assumption.
\begin{assmpt}\label{lip}
Function  $f:\R^n \to \R^n$ is locally Lipschitz continuous, $f(0)=0$ and $K \subset \R^n$ is a closed convex cone.
\end{assmpt}

\subsection{Solution of Complementarity Systems}

Before proceeding with the problem formulation and the corresponding results, it is instructive to recall how a solution to \eqref{eq:compSys} evolves with time, and the underlying optimization problem which may be solved to compute $\eta$.
For a fixed $s \ge 0$, if $x(s) \in \inn (K)$, then $\cN_{K}(x(s)) = \{0\}$, and we let $\eta(s) = 0$. As a result, for some $\eps>0$ and $t \in [s,s+\eps)$, we have $\dot x(t) = f(x(t))$ and $x(t) \in \inn (K)$. However, if for some $\bar s \ge 0$, we have that $x (\bar s) \in \bd(K)$, the boundary of $K$, then we essentially compute $\eta(\bar s)$ satisfying the relation\footnote{Note that, for a closed convex cone $K \subseteq \R^n$, for each $x \in K$, we denote the tangent cone to $K$ at $x$ by $\cT_K(x)$ and $\cN_K(x) = -\cT_{K}(x)^\star$. Also, for $x \in K$, if $\eta \in -\cN_K(x)$, then $\eta \in K^\star$, and hence $K^\star \subseteq \cT_K(x)^\star$.}
\begin{equation}\label{eq:LCCPTangent}
K^\star \ni \eta (\bar s) \perp f(x(\bar s)) + \eta(\bar s) \in \cT_{K}(x).
\end{equation}
If the boundary constraint remains active over an interval $[\bar s, \bar s+\eps]$ for some $\eps > 0$, that is, for each $t \in [\bar s, \bar s+\eps]$, $x(t) \in \bd(K)$, then $\eta (t)$ satisfies the complementarity relation in \eqref{eq:LCCPTangent}. Using the notation in Appendix~\ref{app:basicsComp}, we say that $\eta(t) \in \lccp(f(x(t)),I,\cT_K (x(t)))$, which is equivalently described as the solution to the optimization problem stated in \eqref{optimprob}.

In what follows, it is also important to recall how we interpret the solution to \eqref{eq:compSys} if $x(0) = x_0 \not\in K$. In such a case, we let
\begin{equation}
x_0^+ = \proj_K(x_0) := \argmin_{z\in K} \| x_0 - z \|
\end{equation}
and then propagate the solution with $x_0^+$, the projection of $x_0$ on $K$ with respect to Euclidean norm. We can thus formally define the solution to \eqref{eq:compSys} as follows:

\begin{defn}\label{def:solBasic}
For a given initial condition $x_0 \in \R^n$ and an interval $[0,T]$, a solution to \eqref{eq:compSys} is an absolutely continuous function $x:[0,T] \to \R^n$, such that $x(t) \in K$ for each $t > 0$, and $x_0^+ = \proj_K(x_0) := \argmin_{z\in K} \vert x_0 - z \vert$.
\end{defn}

Under Assumption~\ref{lip}, there exists a unique solution to \eqref{eq:compSys} in the sense of Definition~\ref{def:solBasic}.
We denote by $x(t;x_0)$ the solution of \eqref{eq:compSys}, at time $t \ge 0$ starting with initial condition $x_0$ at time $0$.
Assumption \ref{lip} also guarantees that the origin is an equilibrium and $x(t;0)=0$ is the unique trivial solution starting from $x_0 = 0$.
Indeed, with $K$ being a closed convex cone, we have $0 \in K$. Under the condition $f(0) = 0$, we have $\eta(t) = 0$ and $\dot x(t) = 0$, for all $t \ge 0$.

\section{Problem Formulation} \label{ProblemForm}

This article addresses some questions regarding the stability analysis of the trivial solution, the origin, for system \eqref{eq:compSys}, while assuming throughout that Assumption~\ref{lip} holds.
We first describe the appropriate notion of stability, and discuss some interesting properties that may arise due to the presence of constraints.

\subsection{Stability Notions}

We may now define as in \cite{GoelBrog04, GoelMotr03} the stability of the origin: it is stable if small perturbations of the initial condition at the origin lead to solutions remaining in the neighborhood of the origin for all forward times:
\begin{defn}[Stability]
The origin is stable in the sense of Lyapunov if for every $\eps > 0$ there exists $\delta>0$ such that 
\[
x_0 \in K, \|x_{0}\| \le \delta \Rightarrow \|x(t,x_0)\| \le \eps, \quad \forall t \ge 0.
\]
The origin is locally asymptotically stable if it is stable in the sense of Lyapunov and there exists $\beta >0$ such that 
\[
x_0 \in K, \|x_{0}\| \le \beta \Rightarrow \lim\limits_{t \rightarrow +\infty} \|x(t,x_0)\| =0.
\]
The origin is globally asymptotically stable if the latter implication holds for arbitrary $\beta > 0$. The origin is globally exponentially stable if there exists $c_0 > 0$ and $\alpha > 0$ such that $\|x(t,x_0)\| \le c_0 e^{-\alpha t} \| x_0\|$, for every $x_0 \in K$.
\end{defn}

Compared to the conventional definitions of stability for unconstrained dynamical systems, our domain of interest is reduced to the set $K$ in system \eqref{eq:compSys}. Also, the vector field jumps instantaneously at the boundaries of the set $K$, which may have an impact on the stability of the system. The following examples motivate why it is not enough to analyze stability just by looking at the vector field $f$ in \eqref{eq:compSys}.

\begin{exam}[Constraints make the system stable, even if the unconstrained system is unstable]\label{ex:consGAS}
Let $f(x) = Ax$ with $A = \begin{bsmallmatrix} -1 & -2\\ -1 &-1\end{bsmallmatrix}$, and $K = \R_+^2$. Matrix $A$ is not Hurwitz stable since one of its eigenvalues is in the right-half complex plane. However, constrained system \eqref{eq:compSys} is globally asymptotically stable, see our later Example \ref{ex:discret} in Section~\ref{sec:simul} for a proof based on a Lyapunov function.
\end{exam}

\begin{exam}[Constraints make the system unstable, even if the unconstrained system is stable]
Let $f(x) = Ax$ with $A = \begin{bsmallmatrix} -1.5 & -1\\ 2 &1\end{bsmallmatrix}$, and $K = \R_+^2$. The matrix $A$ is Hurwitz but the constrained system \eqref{eq:compSys} is unstable because on the $x_2$-axis, the vector field is pointing away from the origin.
\end{exam}

Note that in the interior of $K$, system \eqref{eq:compSys} follows the dynamics $\dot x = f(x)$. The first example, however, shows that even if the constrained system is globally asymptotically stable, it is not possible to work with a Lyapunov function for the unconstrained system. In Example~\ref{ex:consGAS}, the unconstrained system does not admit a positive definite function with negative definite time derivative over the entire state space. Consequently, one has to enlarge the search for Lyapunov functions to functions which are positive definite only on the admissible domain. The second example shows that even if one can find a Lyapunov function for the unconstrained system, it may not correspond to a Lyapunov function for the constrained system. Thus, the search of Lyapunov functions for the constrained system needs to be investigated differently from the unconstrained system.

\subsection{Cone-Copositive Lyapunov Functions}

Based on the above notions, one has to adapt the notion of Lyapunov functions when analyzing the stability of complementarity systems. It is thus of interest to introduce {\em cone-copositive} functions:

\begin{defn}[{Cone-copositivity and copositivity}]
Let $K \subseteq \R^n$ be a closed convex cone. A real-valued function $h:\R^n \to \R$ is said to be cone-copositive with respect to $K$, if $h(x) \ge 0$ for each $x \in K$. When $K = \R_+^n$, we simply say that $h$ is copositive.
\end{defn}

Positive definite functions are obviously cone-copositive, regardless of the cone under consideration. However, in general, when the cone $K$ is fixed, positive definite functions only form a subclass of the functions which are cone-copositive with respect to $K$.
With this function class, the following definition of Lyapunov functions for \eqref{eq:compSys} provides more flexibility:

\begin{defn}[{Cone-copositive Lyapunov Function}]\label{def:funcLyap}
System \eqref{eq:compSys} has a continuously differentiable (global) cone-copositive Lyapunov function $V:\R^n \to \R$ with respect to $K$ if
\begin{enumerate}
\item There exist class $\cK_\infty$ functions\footnote{A function $\alpha:\R_+ \to \R_+$ is said to be of class $\cK$ if it is continuous, it satisfies $\alpha(0) = 0$, and it is increasing everywhere on its domain. It is said to be of class $\cK_\infty$ if it is, in addition, unbounded.} $\underline \alpha$, $\overline \alpha$ such that
\[
 \underline \alpha(\| x \| ) \le V(x) \le \overline \alpha(\| x \|), \quad \forall \ x\in K;
 \]
\item There exists a class $\cK$ function $\alpha$ such that
\begin{subequations}\label{gradLyap}
\begin{gather}
\left\langle \nabla V(x),f(x) \right\rangle \le -\alpha(\| x \|), \quad \forall \, x \in \inn(K), \\
\left\langle \nabla V(x),f(x)+\eta \right\rangle \le -\alpha(\| x \|), \quad \forall x \in \bd(K), 
\end{gather}
\end{subequations}
where $\eta \in \lccp(f(x),I,\cT_K(x))$.
\end{enumerate}
Condition \eqref{gradLyap} is split into two parts because the complementarity variable resulting from a complementarity relation takes nonzero value only on the boundary of $K$.
\end{defn}

Note that we require the inequalities in \eqref{gradLyap} to hold \underline{only} for a particular selection of $\eta$. This aspect of our definition is in contrast with several existing works dealing with the existence of Lyapunov functions for differential inclusions \cite{ClarLedy98, TeelPral00}. Our first major question relates to the existence of Lyapunov functions in the sense of Definition~\ref{def:funcLyap}.

{\bf Problem 1:} Does there exist a cone-copositive Lyapunov function for a stable complementarity system ?

We address Problem~1 in Section~\ref{sec:ConvCopLya} and our first main result in Theorem~\ref{thm:converseMain} provides conditions which guarantees existence of a cone-copositive Lyapunov function for exponentially stable systems. Building on this result, and imposing further assumptions on the vector field $f$ in \eqref{eq:compSys}, we are able to prove the existence of homogeneous Lyapunov functions, which are desired for computational reasons.

\subsection{Computations Using Numerical Approximations}

Our next target in the paper is to address the computational aspects of the Lyapunov functions for complementarity systems \eqref{eq:compSys}. While working with  homogeneous vector fields, we restrict our search to rational functions of homogeneous polynomials. It is observed that such functions are dense within the class of homogeneous differentiable functions, see \cite[Lemma~2.1]{AhmKha19}. Moreover, one can adapt the algorithms from the literature on copositive programming to compute these rational functions.

{\bf Problem 2:} If there exists a homogeneous rational cone-copositive Lyapunov function for a stable complementarity system, how can we construct it?

The answer to this question essentially boils down to finding certain polynomials which satisfy some nonnegativity condition. Such questions have again received a lot of attention in real algebraic geometry, and Positivstellens\"atze provide guidance in writing algorithms for the search of Lyapunov functions. We explore two possible routes.

The first method corresponds to a {\em discretization} in the set $K$ by taking points over a simplex. We evaluate the inequalities over that set of discrete points, and we solve for the desired coefficients of the polynomials. This discretization method provides an inner approximation of the cone of copositive polynomials, and it is seen that as the size of discretization step goes to zero, this inner approximation converges to the actual cone.

The second method relies on SOS decomposition of our function. Let $\R[x]$ denote the vector space of real polynomials in the variables $x=(x_{1},\dots,x_{n}) \in \R^n$. A multivariate polynomial $p(x)=p(x_{1},\dots,x_{n})$ is a sum of squares, abbreviated as $p$ is SOS, if it can be written in the form
$
    p(x)=\sum_{k=1}^{m} q_{k}^2(x)
$ 
for some polynomials $q_{k} \in \R[x]$, $k=1,\dots,m$.
The existence of an SOS decomposition is an algebraic certificate for nonnegativity of a polynomial. It is obvious that every SOS polynomial is nonnegative on $\R^n$. But the converse is not always true, that is, a nonnegative polynomial is not necessarily SOS. Dealing with positivity of a polynomial is hard but with SOS, it becomes easier as the problem boils down to semidefinite programming (SDP) or linear matrix inequalities (LMIs), a particular class of convex optimization problems for which efficient algorithms are available, as explained already in Section~\ref{sec:Intro}. For detailed accounts on SOS and positive polynomials and the algebraic concepts, we refer to \cite{Lau09, Lass15}.

\section{Converse Copositive Lyapunov Result}\label{sec:ConvCopLya}

In this section, we will establish an existence result for Lyapunov function, that is, if the system is exponentially stable then there exists a Lyapunov function, with certain properties, for such system.
There exist several results in the literature on converse Lyapunov theorems for systems where the vector fields are discontinuous, see \cite{DayaMart99, MasoBosc06} for switched systems, and \cite{CamlPang06} for complementarity systems. The results in \cite{DayaMart99, MasoBosc06} use linearity of the flows, and the results in \cite{CamlPang06} are restricted to the class of complementarity systems where the right-hand side is Lipschitz continuous (and hence not discontinuous). Here, we study the converse result where the flow maps are not necessarily linear, and the complementarity relations may induce discontinuities in the vector field. In essence, we generalize the converse results on differential inclusions presented in \cite{ClarLedy98, TeelPral00}. An essential difference compared to these results is that our system does not satisfy the regularity assumptions imposed in those works, and instead of strong stability, we address weak stability. Moreover, the structure of the system only allows construction over the admissible domain, which is a closed convex cone in our case. Our main result in this direction appears below. The proof of this theorem is a rather lengthy and technical affair and is carried out in the remainder of this section.

\begin{thm}\label{thm:converseMain}
Under Assumption~\ref{lip}, if the origin is globally exponentially stable for system~\eqref{eq:compSys}, then there exists a continuously differentiable cone-copositive Lyapunov function.
\end{thm}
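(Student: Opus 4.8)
The plan is to build the Lyapunov function directly as an integral functional of the trajectory, in the spirit of the classical Massera--Kurzweil converse constructions, and then upgrade its regularity to $C^1$. Concretely, I would set
\[
V(x_0) := \int_0^\infty \norm{x(\tau; x_0)}^2 \, \mathrm d\tau ,
\]
where global exponential stability is exactly what makes this integral converge. Indeed $\norm{x(\tau;x_0)} \le c_0 e^{-\alpha\tau}\norm{x_0}$ gives at once the upper bound $V(x_0) \le \tfrac{c_0^2}{2\alpha}\norm{x_0}^2$, so $\overline\alpha(r)=\tfrac{c_0^2}{2\alpha}r^2 \in \cK_\infty$. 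For the lower bound I would use that, along any solution, the complementarity relation $\inprod{x}{\eta}=0$ removes the multivalued contribution from $\ddt\tfrac12\norm{x}^2 = \inprod{x}{f(x)+\eta}=\inprod{x}{f(x)}$; combined with local Lipschitzness of $f$ and $f(0)=0$, a reverse Gr\"onwall estimate yields $\norm{x(\tau;x_0)}^2 \ge e^{-2L\tau}\norm{x_0}^2$ on any interval where the trajectory stays in a ball of Lipschitz constant $L$, hence a quadratic $\underline\alpha \in \cK_\infty$. Replacing $\norm{\cdot}^2$ by a suitable Massera integrand $g(\norm{\cdot})$ gives the flexibility to repair these bounds should the growth of the local Lipschitz constant spoil the $\cK_\infty$ property.

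The decrease condition then comes essentially for free from the flow (semigroup) property once $V$ is known to be differentiable. Using $x(\tau;x(t;x_0))=x(t+\tau;x_0)$ one has
\[
V(x(t;x_0)) = \int_t^\infty \norm{x(s;x_0)}^2\,\mathrm ds ,
\]
so $\ddt V(x(t;x_0)) = -\norm{x(t;x_0)}^2$ for a.e.\ $t$. Evaluating at $t=0$ and separating the two cases $x_0\in\inn(K)$ (where $\dot x=f(x)$) and $x_0\in\bd(K)$ (where $\dot x = f(x)+\eta$ for the realized $\eta\in\lccp(f(x_0),I,\cT_K(x_0))$) reproduces exactly the split inequalities of Definition~\ref{def:funcLyap} with $\alpha(r)=r^2$, and only for the particular $\eta$ carried by the trajectory, which is precisely the weak-stability formulation. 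Cone-copositivity is automatic: since $K$ is forward invariant, $x_0\in K$ forces $x(\tau;x_0)\in K$ for all $\tau$, and $V(x_0)\ge \underline\alpha(\norm{x_0})\ge 0$ with equality only at the origin.

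The main obstacle is showing that $V$ is continuously differentiable, since the complementarity relation makes the vector field discontinuous across $\bd(K)$, so the flow $x_0\mapsto x(\tau;x_0)$ need not be differentiable in $x_0$ and one cannot naively differentiate under the integral. I would first establish that this flow is locally Lipschitz in the initial condition, uniformly on compact time intervals: for two solutions $x,y$, monotonicity of the normal-cone operator gives $\inprod{\eta_x-\eta_y}{x-y}\le 0$, whence $\ddt\tfrac12\norm{x-y}^2 = \inprod{f(x)-f(y)+\eta_x-\eta_y}{x-y} \le L\norm{x-y}^2$ on any ball where $f$ is $L$-Lipschitz, so $\norm{x(\tau;x_0)-x(\tau;y_0)}\le e^{L\tau}\norm{x_0-y_0}$. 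This makes $V$ locally Lipschitz, hence (Rademacher) differentiable almost everywhere, but not yet everywhere $C^1$. To close the gap I would invoke a regularization/mollification argument in the vein of \cite{ClarLedy98, TeelPral00}, adapted here to smooth the Lipschitz candidate into a genuine $C^1$ function while retaining, up to an arbitrarily small relaxation, both the $\cK_\infty$ sandwich bounds and the strict decrease along trajectories. The delicate features distinguishing this from the cited results are that the decrease is demanded only for the trajectory's own selection of $\eta$ (weak rather than strong stability, so the usual regularity hypotheses on the inclusion are unavailable) and that the construction, the smoothing, and the copositivity must be carried out consistently over the conic domain $K$ rather than over all of $\R^n$.
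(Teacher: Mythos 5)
Your blueprint coincides with the paper's own: an integral-of-the-trajectory functional, sandwich bounds from exponential stability, Lipschitz dependence on initial data via monotonicity of the normal cone, and a Clarke--Ledyaev--Stern mollification to reach $\cC^1$; your remark that the complementarity relation $\langle x,\eta\rangle=0$ removes the multiplier from $\ddt\|x\|^2$ is correct and is even slightly cleaner than the paper's use of Proposition~\ref{prop:boundEta} inside Lemma~\ref{lem:contIniCond}. However, the two steps you treat as routine are exactly where the difficulty sits, and as written they do not go through.

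First, the decrease condition is not ``for free.'' Passing from the exact identity $V(x(t;x_0))-V(x_0)=-\int_0^t\|x(s;x_0)\|^2\,ds$ to the gradient inequalities \eqref{gradLyap} requires evaluating a derivative at $t=0$, and that needs two facts you never establish: differentiability of $V$ at $x_0$ (you only obtain local Lipschitzness, and the Rademacher set of differentiability points can entirely miss $\bd(K)$, which is Lebesgue-null and is precisely where $\eta\neq 0$, i.e.\ where the second inequality of Definition~\ref{def:funcLyap} lives), and right-differentiability of the solution at $t=0^+$ with $\dot x(0^+)=f(x_0)+\eta_{x_0}$, $\eta_{x_0}$ the realized $\lccp$ selection. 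The second fact is the paper's Lemma~\ref{lem:discEst}, the Euler-step estimate $\|z+tF(z)-x(t;z)\|=o(t)$ with $F(z)=f(z)+\eta_z$, and its proof is the technical core of the theorem (monotonicity of the normal cone, the bound $\|\eta\|\le C\|f(x)\|$, Young's inequality and Gr\"onwall). Mollifying first does not avoid it: to bound $\langle\nabla V_\sigma(z),F(z)\rangle$ you must still compare $V(w+tF(w))$ with $V(x(t;w))$ at every point $w$, boundary points included, which is why the paper proves the pointwise Dini inequality of Lemma~\ref{lem:calcGradae} \emph{before} regularizing in Lemma~\ref{lem:smoothReg}.

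Second, the exponent $2$ in your integrand is in general too small, and your Massera-integrand hedge is aimed at the sandwich bounds rather than at the binding constraint. Local Lipschitz continuity of $V$ pairs the expansion factor $e^{L\tau}$ of Lemma~\ref{lem:contIniCond} against the decay $e^{-(p-1)\alpha\tau}$ of the mean-value factor $\xi(\tau)^{p-1}$, so the relevant integral converges only if $(p-1)\alpha>L$. With $p=2$ this demands $\alpha>L$, which exponential stability does not provide; when it fails, $V$ need not be locally Lipschitz, there is no a.e.\ gradient, and the mollification estimates collapse. The paper takes the integrand $\|x(\tau;\cdot)\|^{\frac{2L}{\alpha}+1}$ in \eqref{eq:defVConv} precisely so that $(p-1)\alpha=2L>L$; and in order for a single global modulus $L$ to exist at all, it first replaces $f$ by a globally Lipschitz field through the time-reparametrization of Lemma~\ref{lem:lipCont} --- a reduction your sketch, which works with a local Lipschitz constant ``on any ball,'' also needs but never performs. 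A minor further omission in the same direction: your $V$ is defined only on $K$, whereas mollification evaluates $V$ on a full ball around each $z$, which is why the paper composes with $\proj_K$ in \eqref{eq:defVConv}.
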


%
%
   
To prove Theorem~\ref{thm:converseMain}, we start with the following lemma.

\begin{lem}\label{lem:lipCont}
If Assumption~\ref{lip} holds and the origin is globally exponentially stable for system~\eqref{eq:compSys}, then there exists a globally Lipschitz function $\wh f:\R^n\to \R^n$ such that the system
\begin{equation}\label{eq:compSysLip}
\begin{aligned}
\dot x =& \wh f(x) + \eta \\
K^\star \ni \eta & \perp x \in K
\end{aligned}
\end{equation}
has a globally exponentially stable equilibrium and a continuously differentiable cone-copositive Lyapunov function $\wh V$. Moreover, $\wh V$ is a Lyapunov function for \eqref{eq:compSys}.
\end{lem}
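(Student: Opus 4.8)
The plan is to read Lemma~\ref{lem:lipCont} as a \emph{reduction}: the converse construction in the rest of this section is cleanest for a drift that is globally Lipschitz, so I would first replace $f$ by such a surrogate $\wh f$ that is indistinguishable from $f$ on the part of $K$ explored by the trajectories, and only afterwards build the Lyapunov function. The exponential estimate $\norm{x(t;x_0)}\le c_0\epower{-\alpha t}\norm{x_0}$ supplied by global exponential stability is what localizes the problem: for every $x_0\in K$ the whole forward orbit lies in $\bar B_{c_0\norm{x_0}}$, so on any fixed ball $\bar B_R$ the original and a suitably modified flow agree on all trajectories that never leave $\bar B_R$. The target is therefore a globally Lipschitz $\wh f$ with $\wh f\equiv f$ on $\bar B_R$ and $\wh f(0)=0$.

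To build $\wh f$ I would keep $f$ untouched on $\bar B_R$ and, outside, interpolate smoothly to a globally Lipschitz field that still drives the state inward, matching continuously across $\partial B_R$; since $f$ is locally Lipschitz it is Lipschitz on each compact shell, so the patched field is globally Lipschitz, and Assumption~\ref{lip} persists for \eqref{eq:compSysLip}. I would carry out the patch so as not to disturb the conic data: $K$, $K^\star$ and the tangent cone $\cT_K$ are untouched, and because projection onto the convex cone $\cT_K(x)$ is positively homogeneous, the complementarity selection $\eta\in\lccp(\cdot)$ and the existence-uniqueness guaranteed under Assumption~\ref{lip} are inherited by \eqref{eq:compSysLip}. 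The exterior field is chosen so that \eqref{eq:compSysLip} remains globally exponentially stable.

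With $\wh f$ fixed I would apply the converse construction developed in the remainder of this section to the now globally Lipschitz, exponentially stable system \eqref{eq:compSysLip}: integrating a $\cK_\infty$ functional of the norm along its (forward-unique) trajectories and smoothing produces a continuously differentiable, cone-copositive $\wh V$ obeying the $\cK_\infty$ sandwich bounds and the two-part decrease \eqref{gradLyap} written with $\wh f$. To obtain the final claim, I would restrict attention to $\bar B_R$, where $\wh f=f$: there every defining inequality of Definition~\ref{def:funcLyap} for \eqref{eq:compSysLip} is literally an inequality for \eqref{eq:compSys}, while the sandwich bounds do not involve the drift at all. Hence $\wh V$ is simultaneously a Lyapunov function for \eqref{eq:compSys}.

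The delicate point is to make the exterior patch satisfy three constraints at once: \emph{global} Lipschitz continuity, preservation of the \emph{exponential} (not merely asymptotic) rate for \eqref{eq:compSysLip}, and compatibility with the discontinuity of the vector field on $\bd(K)$. The two natural constructions pull in opposite directions: a positive radial rescaling $\wh f=\beta f$ with $0<\beta\le1$ keeps $\wh f$ positively collinear with $f$—so the decrease transfers to \eqref{eq:compSys} on \emph{all} of $K$, not only on $\bar B_R$—but a $\beta$ that tapers fast enough to tame a superlinearly growing $f$ forces the reparametrized flow to lose its uniform exponential envelope; conversely an inward linear patch outside $\bar B_R$ secures global exponential stability and global Lipschitzness but only yields the transfer on $\bar B_R$. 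Reconciling these—i.e.\ certifying that the $\wh V$ produced for \eqref{eq:compSysLip} is genuinely a Lyapunov function for \eqref{eq:compSys} in the global sense of Definition~\ref{def:funcLyap}, while $\wh f$ stays globally Lipschitz—is the main obstacle, and is what distinguishes this argument from the smooth, regularity-assuming converse theorems of \cite{ClarLedy98, TeelPral00} and the Lipschitz right-hand-side setting of \cite{CamlPang06}.
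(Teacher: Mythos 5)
Your construction does not prove the lemma as stated, and you in fact identify the fatal point yourself without resolving it. The clause ``$\wh V$ is a Lyapunov function for \eqref{eq:compSys}'' is meant in the global sense of Definition~\ref{def:funcLyap}: the decrease conditions \eqref{gradLyap} must hold for \emph{all} $x\in\inn(K)$ and \emph{all} $x\in\bd(K)$, not merely on a ball. With your patch, $\wh f$ agrees with $f$ only on $\bar B_R$, so the inequalities certified for \eqref{eq:compSysLip} translate into inequalities for \eqref{eq:compSys} only on $\bar B_R\cap K$; outside that ball you have no control over $\langle\nabla\wh V(x),f(x)+\eta_x\rangle$ at all, since there $\wh f$ and $f$ are unrelated. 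No fixed choice of $R$ repairs this, and no single $\wh f$ emerges from letting $R$ grow: the lemma is used precisely to assume, without loss of generality, that $f$ is globally Lipschitz in the proof of Theorem~\ref{thm:converseMain}, which is a \emph{global} converse statement, so a reduction valid only on a compact set would degrade the theorem to a local one. Your final paragraph concedes exactly this (the inward patch ``only yields the transfer on $\bar B_R$'') and labels the reconciliation ``the main obstacle''; an acknowledged, unresolved obstacle is a gap, not a proof.

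The paper's proof (Appendix~\ref{app:globLip}) takes the other branch of your dichotomy: it sets $\wh f(x)=\beta(x)f(x)$, where $\beta$ is a continuous positive scalar function chosen so that $\beta f$ is globally Lipschitz, which exists by \cite[Lemma~4.10]{ClarLedy98}. Positive collinearity is exactly what makes the transfer global rather than local: by Proposition~\ref{prop:solScaleLcp}, the complementarity multiplier of the rescaled system is $\wh\eta_x=\beta(x)\eta_x$, hence for every $x\in K$,
\[
\left\langle \nabla\wh V(x), f(x)+\eta_x\right\rangle
=\tfrac{1}{\beta(x)}\left\langle \nabla\wh V(x), \wh f(x)+\wh\eta_x\right\rangle
\le -\tfrac{1}{\beta(x)}\,\wh\gamma(\|x\|)\le -\gamma(\|x\|)
\]
for any class $\cK$ function $\gamma$ with $\gamma(\|x\|)\le\wh\gamma(\|x\|)/\beta(x)$, so $\wh V$ serves as a Lyapunov function for \eqref{eq:compSys} on all of $K$. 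Stability of \eqref{eq:compSysLip} is then obtained by the time reparametrization $z(t)=x(\rho(t))$ with $\rho(t)=\int_0^t\beta(z(s))\,ds$. Your objection to this branch---that the reparametrized flow may lose the uniform exponential envelope---is a fair criticism of that last step (the argument in Appendix~\ref{app:globLip} as written really delivers asymptotic convergence, and recovering an exponential rate requires a lower bound on $\rho(t)/t$ along trajectories), but it is a reason to tighten the rescaling argument, not to abandon it for a cut-off construction that provably cannot yield the global ``moreover'' clause.
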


The proof of Lemma~\ref{lem:lipCont} appears in Appendix~\ref{app:globLip}. Based on Lemma~\ref{lem:lipCont}, it can be assumed for the proof of Theorem~\ref{thm:converseMain}, without loss of generality, that $f$ in \eqref{eq:compSys} is a globally Lipschitz continuous vector field with modulus $L$ and this assumption is assumed to hold in the remainder of this section. Note that, in Theorem~\ref{thm:converseMain}, we assume the origin to be globally exponentially stable and our proof (appearing next) indeed uses that property. It remains to be seen if the proof can be adapted to the case where the origin is only asymptotically stable.

For the proof of Theorem~\ref{thm:converseMain}, we construct the Lyapunov function for \eqref{eq:compSys} by introducing a function $V : \R^n \to \R$, defined as
\begin{equation}\label{eq:defVConv}
V(z) = \int_{0}^\infty \left\| x(\tau; \proj_K(z)) \right\|^{\frac{2L}{\alpha}+1} \, d\tau,
\end{equation}
where $x(\tau;\overline z)$ denotes the solution to system \eqref{eq:compSys} at time $\tau \ge 0$ with $x(0^+) = \overline z \in K$. Note that $V$ is defined for each $z \in \R^n$ and not just for $z \in K$. When $z \not\in K$, the term $x(\tau; \proj_K(z))$ can be interpreted as the solution obtained by projecting the initial condition on $K$, and then propagating it continuously according to the system vector field. Thus, for $\tau > 0$, we have $x(\tau; \proj_K(z)) = x(\tau;z)$ for each $z \in \R^n$.

\subsection{Bounds on Solutions}

The following lemma demonstrates the continuity of solutions with respect to the initial conditions and plays an important role in the remainder of the proof.

\begin{lem}\label{lem:contIniCond}
Let $L$ be the Lipschitz modulus of $f$. If $x$ and $\wh x$ are two solutions to system \eqref{eq:compSys} that satisfy $x(0) = z \in K$ and $\wh x(0) = \wh z \in K$, then it holds that, for each $\tau > 0$,
\begin{subequations}\label{eq:contSolIniCond}
\begin{equation}
\| x(\tau; z) - \wh x(\tau; \wh z) \| \le e^{L \tau} \| z - \wh z\|  \label{eq:contSolIniConda}
\end{equation}
and for some $C > 0$,
\begin{equation}
\| x(\tau; z) \| \ge e^{-C \tau} \| z \|. \label{eq:contSolIniCondb}
\end{equation}
\end{subequations}
\end{lem}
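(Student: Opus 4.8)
The plan is to derive both inequalities by differentiating an appropriate squared norm along the trajectories and then invoking a Gr\"onwall-type comparison, the whole argument resting on two structural facts: the monotonicity of the normal-cone operator for part (a), and the complementarity orthogonality $x^\top\eta = 0$ for part (b). Throughout, note that since $x(0) = z \in K$ and $\wh x(0) = \wh z \in K$, no initial projection takes place; both trajectories are absolutely continuous, and the selections $\eta(t) := \dot x(t) - f(x(t)) \in -\cN_K(x(t))$ and $\wh\eta(t) := \dot{\wh x}(t) - f(\wh x(t)) \in -\cN_K(\wh x(t))$ are defined for a.e.\ $t > 0$.

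For the upper bound \eqref{eq:contSolIniConda}, consider $w(t) := \norm{x(t) - \wh x(t)}^2$. As the squared norm of a difference of absolutely continuous functions, $w$ is itself absolutely continuous, and for a.e.\ $t$,
\begin{equation*}
\tfrac{1}{2}\dot w(t) = \inprod{x(t) - \wh x(t)}{f(x(t)) - f(\wh x(t))} + \inprod{x(t) - \wh x(t)}{\eta(t) - \wh\eta(t)}.
\end{equation*}
The key input is that $\cN_K$ is a monotone operator: since $-\eta(t) \in \cN_K(x(t))$ and $-\wh\eta(t) \in \cN_K(\wh x(t))$, monotonicity gives $\inprod{x(t) - \wh x(t)}{\eta(t) - \wh\eta(t)} \le 0$. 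The first term is bounded by $L\norm{x(t) - \wh x(t)}^2$ via Cauchy--Schwarz and the Lipschitz modulus $L$ of $f$. Hence $\dot w(t) \le 2L\,w(t)$ a.e., and Gr\"onwall's inequality yields $w(t) \le e^{2Lt}\,w(0)$, which is \eqref{eq:contSolIniConda} after taking square roots.

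For the lower bound \eqref{eq:contSolIniCondb}, I set $v(t) := \norm{x(t)}^2$ and compute, for a.e.\ $t > 0$,
\begin{equation*}
\tfrac{1}{2}\dot v(t) = \inprod{x(t)}{f(x(t))} + \inprod{x(t)}{\eta(t)}.
\end{equation*}
Here the complementarity relation $K^\star \ni \eta(t) \perp x(t) \in K$ forces $\inprod{x(t)}{\eta(t)} = x(t)^\top\eta(t) = 0$, so the normal-cone term vanishes identically. Using $f(0) = 0$ and the Lipschitz bound $\norm{f(x)} \le L\norm{x}$, the remaining term satisfies $\inprod{x(t)}{f(x(t))} \ge -L\norm{x(t)}^2$. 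Therefore $\dot v(t) \ge -2L\,v(t)$ a.e., and the comparison lemma gives $v(t) \ge e^{-2Lt}\,v(0)$, i.e.\ \eqref{eq:contSolIniCondb} holds with $C = L$.

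The algebra is short once the right structural facts are in place, so the main care is needed in the justification rather than the computation. Specifically, one must confirm that the squared norms are absolutely continuous so that the chain rule holds at almost every $t$ and the differential inequalities can be integrated, and one must correctly invoke the monotonicity of $\cN_K$ (equivalently, of the subdifferential $\partial\psi_K$) in part (a). The complementarity orthogonality $x^\top\eta = 0$, which makes the normal-cone contribution disappear in part (b), is precisely the feature that separates this computation from the generic differential-inclusion setting and delivers a clean exponential lower bound.
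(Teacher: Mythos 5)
Your proof is correct. For the upper bound \eqref{eq:contSolIniConda} it is essentially the paper's own argument: the paper writes out the monotonicity of $\cN_K$ explicitly as two variational inequalities evaluated at the test points $y = \wh x(t)$ and $\wh y = x(t)$, adds them, applies the Lipschitz bound, and invokes the comparison lemma, exactly as you do in abstract form.

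Where you genuinely diverge is in the lower bound \eqref{eq:contSolIniCondb}. The paper does \emph{not} use the orthogonality $\langle x(t),\eta(t)\rangle = 0$; instead it invokes Proposition~\ref{prop:boundEta} of Appendix~\ref{app:basicsComp} to get $\|\eta(t)\| \le C_\eta \|f(x(t))\|$, which yields
\[
\Bigl\vert \tfrac{d}{dt}\|x(t)\|^2 \Bigr\vert = 2\,\bigl\vert \langle x(t), f(x(t)) + \eta(t)\rangle \bigr\vert \le 2L(1+C_\eta)\,\|x(t)\|^2,
\]
and hence the claim with $C = L(1+C_\eta)$. Your route --- killing the normal-cone term outright via $K^\star \ni \eta \perp x \in K$ --- is sharper and more self-contained: it bypasses the appendix proposition and gives the explicit constant $C = L$. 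The orthogonality you rely on is legitimate precisely because $K$ is a cone containing the origin (take $x' = 0$ and $x' = 2x$ in the definition \eqref{eq:defnormalcone}, or cite the equivalence from \cite[Proposition~1.1.3]{FaccPang03} recalled in Section~\ref{sec:prelim}). What the paper's heavier argument buys is generality: the bound $\|\eta\| \le C_\eta\|f(x)\|$ does not require $S$ to be a cone, so that proof survives in the broader setting \eqref{diffnormalincl} with a general closed convex set $S$, where the orthogonality fails (one only gets $\langle x,\eta\rangle \le 0$, which is the wrong sign for a lower bound on $\tfrac{d}{dt}\|x\|^2$). Within the stated scope of the lemma, your proof is complete and arguably cleaner.
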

\begin{proof}
It will be assumed without loss of generality that $z \in K$ and $\wh z \in K$ since $\| \proj_K(z) - \proj_K(\wh z) \| \le \| z -\wh z\|$.
By definition of the solution to \eqref{eq:compSys} and monotonicity of the normal cone operator, it follows that, for each $y \in K$,
\[
\left\langle \frac{d x}{dt} (t) - f(x(t)), y - x(t) \right \rangle \ge 0
\]
and similarly, for each $\wh y \in K$,
\[
\left\langle \frac{d \wh x}{dt} (t) - f(\wh x(t)), \wh y - \wh x(t) \right \rangle \ge 0,
\]
where we have suppressed the dependence of $x$ and $\wh x$ on the initial condition for brevity.
Letting $y = \wh x(t) \in K$, and $\wh y = x(t) \in K$, we get the following by adding the last two inequalities:
\[
\left\langle \frac{d}{dt} (x(t) - \wh x(t)), x(t) - \wh x(t) \right \rangle \le \left\langle f(x(t)) - f(\wh x(t)), x(t) - \wh x(t) \right \rangle,
\]
or equivalently,
\[
\frac{d}{dt} \| x(t) - \wh x(t) \|^2 \le 2 \left\langle f(x(t)) - f(\wh x(t)), x(t) - \wh x(t) \right \rangle.
\]
Because of the Lipschitz continuity assumption,
$\| f(x(t)) - f(\wh x(t))  \| \le L \| x(t) - \wh x(t)\|$,
and hence,
\[
 \frac{d}{dt} \| x(t) - \wh x(t) \|^2  \le 2 L \| x(t) - \wh x(t) \| ^2.
\]
The bound in \eqref{eq:contSolIniConda} now follows by integrating both sides, or invoking the so-called comparison lemma~\cite[Lemma~3.4]{Khalil02}. To get the bound in \eqref{eq:contSolIniCondb}, we make use of Proposition~\ref{prop:boundEta}  in the Appendix~\ref{app:basicsComp} which ensures that there exists a constant $C_\eta>0$ such that $\vert \eta \vert \le C_\eta \vert f(x(t))\vert $. We therefore get
\begin{align*}
\left\vert \frac{d}{dt} \| x(t) \|^2 \right\vert = 2 \left \vert \langle x(t), f(x(t)) + \eta \rangle\right\vert  \le 2 L(1+C_\eta) \| x(t)\|^2.
\end{align*}
In particular, $\frac{d}{dt} \| x(t) \|^2 \ge -2 L(1+C_\eta) \| x(t) \|^2$, and hence, the inequality in \eqref{eq:contSolIniCondb} follows by taking $C = L(1+C_\eta)$.
\end{proof}

To show that $V$ satisfies item 1) of Definition~\ref{def:funcLyap}, let us first use the bound in~\eqref{eq:contSolIniCondb} from Lemma~\ref{lem:contIniCond}, so that
\[
\begin{aligned}
V(z) & \ge \int_{0}^\infty e^{-(2L+\alpha)C\tau/\alpha}\| \proj_K(z) \|^{\frac{2L}{\alpha}+1} \, d\tau \\  & \ge \underline C \| \proj_K(z)\|^{\frac{2L}{\alpha}+1},
\end{aligned}
\]
for some $\underline C > 0$. Also, exponential stability of the origin implies that $\| x(\tau; z) \| \le c_0 e^{-\alpha \tau} \|\proj_K(z)\|$ and hence there exists $\overline C> 0$ 
\[
\begin{aligned}
V(z) & \le c_0 \int_{0}^\infty e^{-(2L+\alpha) \tau} \| \proj_K(z)\|^{\frac{2L}{\alpha}+1} \, d\tau \\
& \le  \overline C \,\| \proj_K(z)\|^{\frac{2L}{\alpha}+1}.
\end{aligned}
\]


\subsection{Local Lipschitz Continuity of $V$}
To show that $V$ is locally Lipschitz continuous, we need the following two properties \cite{ClarSter93}:
\begin{itemize}
\item $V$ is continuous; and
\item its Dini subderivative\footnote{The Dini subderivative of $V$ at $x$ in the direction $v$ is defined as \[  DV(x;v):=\liminf_{w \to v, \eps \to 0^+}  \frac{V(x + \eps w) - V(x)}{\eps}.\]} satisfies 
\begin{equation}\label{eq:defDiniBnd}
D V(z;v) \le \phi (z) \| v \|
\end{equation}
for every $v \in \R^n$, every $z \in \R^n$, and some locally bounded function $\phi : \R^n \to \R$, with $\phi (z) > 0$ for $z \neq 0$.
\end{itemize}
The continuity of $V$ follows directly from Lemma~\ref{lem:contIniCond} as the exponential bound on the solutions of the system makes $V$ a composition of continuous functions.
These properties can again be shown using Lemma~\ref{lem:contIniCond}. Fix $v \in \R^n$. Consider a sequence of initial conditions $\wh z_k = z + \eps_k v$. We get
\begin{align}
& D V(z;v) \le \liminf_{\eps_k \to 0} \frac{V(z+\eps_k v) - V(z)}{\eps_k} \notag \\
& = \liminf_{\eps_k \to 0} \frac{1}{\eps_k} \left( \int_0^\infty ( \| \wh x_k(\tau; \wh z_k) \|^{\frac{2L}{\alpha}+1}  - \| x(\tau; z) \|^{\frac{2L}{\alpha}+1} ) d\tau \right) . \label{eq:boundDVlin}
\end{align}
Using the mean-value theorem, for each $s \ge 0$, there exists $\xi(s)$ between $\| \wh x_k(s;\wh z_k)\|$ and $\| x(s;z) \|$ such that 
\begin{align}
& \| \wh x_k(s; \wh z_k) \|^{\frac{2L}{\alpha}+1} - \| x(s; z) \|^{\frac{2L}{\alpha}+1} \notag \\
\le \ & \big \vert \,  \|\wh x_k(s;\wh z_k)\|^{\frac{2L}{\alpha}+1}  - \| x(s;z)\|^{\frac{2L}{\alpha}+1} \big \vert \notag \\
= \ & \big \vert \, \xi(s)^{2L/\alpha} (\| \wh x_k(s;\wh z_k)\| - \| x(s;z)\| ) \big \vert \notag \\
\le \ & \vert \xi(s) \vert^{2L/\alpha} \| \wh x_k(s;\wh z_k) - x(s;z) \| . \label{eq:bndMeanValue}
\end{align}
It follows from Lemma~\ref{lem:contIniCond} that $\| \wh x_k(s;\wh z_k) - x(s;z) \| \le e^{Ls} \eps_k \|v \|$. Substituting these bounds in \eqref{eq:boundDVlin}, we get
\[
D V(z;v) \le \| v\| \int_{0}^\infty  e^{Ls} \vert \xi(s)\vert^{2L/\alpha} \, ds .
\]
Due to the exponential stability assumption, $\| \xi(s) \| \le \wh c \, e^{-\alpha s} \| z \|$, for some $\wh c > 0$, and hence we choose
\[
\phi(z) =  \wh c \, \| z \|^{2L/\alpha} \int_0^\infty e^{-L s} \, ds,
\]
so that the bound \eqref{eq:defDiniBnd} is seen to hold. Thus, $V$ is locally Lipschitz continuous.
\subsection{Infinitesimal Decrease in $V$}
As the next step, we show that the function $V$ decreases along the system vector field. In what follows, we will denote the right-hand side of \eqref{eq:compSys} by $F(z)$, so that
\[
F(z) \in f(z) -\cN_K(z).
\]

The function $V$ in \eqref{eq:defVConv} is differentiable almost everywhere because it is locally Lipschitz continuous.
We next show that the Dini subderivative of $V$, along $F(z)$ is negative definite.
\begin{lem}\label{lem:calcGradae}
For the function $V:\R^n \to \R$ in \eqref{eq:defVConv}, and $z \in K$,
\begin{equation}\label{eq:lipDiniBound}
DV(z;F(z)) \le - \|z\|^{\frac{2L}{\alpha}+1}. 
\end{equation} 
\end{lem}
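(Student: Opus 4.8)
The plan is to exploit the semigroup structure of $V$ together with the right-differentiability of the trajectories at $t=0$. The central observation is that, since solutions of \eqref{eq:compSys} are unique and enjoy the flow property $x(\tau; x(t;z)) = x(\tau+t;z)$, and since $x(t;z)\in K$ for every $t>0$ so that $\proj_K$ acts as the identity there, one has for $z\in K$
\[
V(x(t;z)) = \int_0^\infty \big\| x(\tau; x(t;z)) \big\|^{\frac{2L}{\alpha}+1}\,d\tau = \int_t^\infty \| x(s;z) \|^{\frac{2L}{\alpha}+1}\,ds.
\]
The integrand $s\mapsto \|x(s;z)\|^{\frac{2L}{\alpha}+1}$ is continuous because the solution is absolutely continuous, so by the fundamental theorem of calculus the one-sided derivative of $t\mapsto V(x(t;z))$ at $t=0$ equals $-\|z\|^{\frac{2L}{\alpha}+1}$. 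Concretely, for $\eps>0$,
\[
\frac{V(x(\eps;z))-V(z)}{\eps} = -\frac{1}{\eps}\int_0^\eps \| x(s;z) \|^{\frac{2L}{\alpha}+1}\,ds \xrightarrow[\eps\to 0^+]{} -\|z\|^{\frac{2L}{\alpha}+1}.
\]

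Next I would connect this decay along trajectories to the Dini subderivative in the direction $F(z)$. The key fact is that the solution is right-differentiable at $t=0$ with $\dot x(0^+)=F(z)$: in the interior $F(z)=f(z)$, while on the boundary $F(z)=f(z)+\eta$ with $\eta$ the $\lccp$ selection from \eqref{eq:LCCPTangent}, so that $F(z)\in\cT_K(z)$ is exactly the admissible velocity. Hence $w_\eps := \tfrac{1}{\eps}\big(x(\eps;z)-z\big)\to F(z)$ as $\eps\to 0^+$, and the pair $(w_\eps,\eps)$ is an admissible path in the $\liminf$ defining $DV(z;F(z))$. Since a $\liminf$ is bounded above by the limit along any particular path, and since $z+\eps w_\eps = x(\eps;z)$, we obtain
\[
DV(z;F(z)) \;\le\; \lim_{\eps\to 0^+}\frac{V(z+\eps w_\eps)-V(z)}{\eps} \;=\; \lim_{\eps\to 0^+}\frac{V(x(\eps;z))-V(z)}{\eps} \;=\; -\|z\|^{\frac{2L}{\alpha}+1},
\]
which is precisely \eqref{eq:lipDiniBound}.

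The main obstacle is establishing the right-differentiability $\dot x(0^+)=F(z)$ and the flow property at boundary points of $K$. For these normal-cone (sweeping-process type) dynamics the right derivative exists at every time and equals the projection of $f(x(t))$ onto $\cT_K(x(t))$, which coincides with $f(z)+\eta$ from \eqref{eq:LCCPTangent}; I would invoke the well-posedness and solution theory underlying Assumption~\ref{lip} and the discussion around \eqref{eq:LCCPTangent} to supply this. A secondary subtlety, already anticipated in the structure of the argument, is that $V$ is only locally Lipschitz and not differentiable, which is exactly why the Dini subderivative is used in place of a classical gradient; the $\liminf$-versus-limit inequality above is what lets the one-sided derivative of $V$ along the flow serve as an upper bound for $DV(z;F(z))$.
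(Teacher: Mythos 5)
Your reduction is clean and, in one respect, more economical than the paper's: by exploiting the freedom $w \to v$ in the definition of the Dini subderivative and choosing the path $w_\eps = \tfrac{1}{\eps}(x(\eps;z)-z)$, so that $z+\eps w_\eps = x(\eps;z)$ exactly, you bypass any comparison between $V(z+tF(z))$ and $V(x(t;z))$. The paper instead splits $V(z+tF(z))-V(z)$ into $[V(z+tF(z))-V(x(t;z))]+[V(x(t;z))-V(z)]$ and must invoke the local Lipschitz continuity of $V$ (established in the preceding subsection) to control the first bracket. Your treatment of the other ingredient --- the flow property plus the fundamental theorem of calculus giving $\lim_{\eps\to 0^+}\eps^{-1}\left(V(x(\eps;z))-V(z)\right) = -\|z\|^{\frac{2L}{\alpha}+1}$ --- matches the paper's corresponding step and is correct, as is your observation that the joint $\liminf$ defining $DV(z;F(z))$ is bounded above by the limit along your particular path.

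The gap is the step you yourself flag: $w_\eps \to F(z)$, i.e.\ right-differentiability of the solution at $t=0$ with derivative $F(z)$, equivalently $\|x(t;z) - z - tF(z)\| = o(t)$. This is exactly the paper's Lemma~\ref{lem:discEst}, and it is the technical heart of the entire proof: the paper establishes it from the variational-inequality characterization of solutions, a careful choice of test point $\wt y = \tfrac{1}{t}\left(x(t)-\wt z(t)+t\wt z(t)\right) \in K$, the bound $\|\eta_z\| \le C_\eta \|f(z)\|$ from Proposition~\ref{prop:boundEta}, Young's inequality, and a Gronwall-type estimate yielding $\|\wt z(t)-x(t)\|^2 \le C_{3,z}t^3 + o(t^3)$. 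Your proposal defers this to ``the well-posedness and solution theory underlying Assumption~\ref{lip} and the discussion around \eqref{eq:LCCPTangent}'', but neither supplies it: Assumption~\ref{lip} yields only existence and uniqueness of solutions, and the discussion around \eqref{eq:LCCPTangent} is a heuristic description of how $\eta$ is selected, not a theorem about $\dot x(0^+)$ at boundary points. The fact you need is true --- it is the classical statement that solutions of $\dot x \in f(x) - \cN_K(x)$, a Lipschitz perturbation of a maximal monotone inclusion, are right-differentiable with the minimal-norm (``lazy'') velocity $\proj_{\cT_K(x)}(f(x)) = f(x)+\eta$, and it could be imported from Brezis-type results on monotone evolution equations --- but as written it is asserted rather than proved, and it is precisely the part the paper spends a page establishing.
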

\begin{proof}[Proof of Lemma~\ref{lem:calcGradae}]
To prove \eqref{eq:lipDiniBound}, we need a bound on $V(z) - V(z+tF(z))$ for $t\ge 0$ sufficiently small.
We will get the desired bounds by rewriting the difference as
\begin{equation}\label{eq:gradEstTwoTerms}
V(z+tF(z)) - V(z) = \big[V(z+tF(z)) - V(x(t;z)) \big] 
+ \big[V(x(t;z)) - V(z) \big]
\end{equation}
and getting a bound on each of the two difference terms on the right-hand side.
The first term $V(z+tF(z)) - V(x(t;z))$ can be analyzed from the following lemma:
\begin{lem}\label{lem:discEst}
For $t> 0$ sufficiently small,  it holds that
\begin{equation}\label{eq:estDiscConverse}
\| z+tF(z)  - x(t;z) \|  \le o(t)
\end{equation}
for each $z \in K$.
\end{lem}
The proof of Lemma~\ref{lem:discEst} will follow momentarily. Using the estimate \eqref{eq:estDiscConverse}, and the inequalities \eqref{eq:boundDVlin} and \eqref{eq:bndMeanValue}, we get
\begin{align*}
V(z+tF(z)) -  V(x(t;z))  \le  C_\phi \| z+tF(z)  - x(t;z) \| = o(t),
\end{align*}
for a fixed $z \in K$, and some $C_\phi > 0$.
For the second term on the right-hand side of \eqref{eq:gradEstTwoTerms}, it follows from the definition of $V$ in \eqref{eq:defVConv}, with $x(0) = z$, that
\[
V(z) \ge \int_{0}^t \| x(\tau;z) \|^{\frac{2L}{\alpha}+1} d\, \tau+ \int_{0}^\infty \| x(\tau;x(t;z))\|^{\frac{2L}{\alpha}+1} \, d\,\tau
\]
and hence
\begin{equation}\label{eq:boundTempDiff}
V(x(t;z)) - V(z) \le - \int_{0}^t \| x(\tau;z) \|^{\frac{2L}{\alpha}+1} d\, \tau.
\end{equation}
Substituting the bounds from \eqref{eq:estDiscConverse} and \eqref{eq:boundTempDiff} in \eqref{eq:gradEstTwoTerms}, we get
\begin{align*}
& \liminf_{t \to 0^+} \frac{V(z+tF(z)) - V(z)}{t}  \\
\le \ & \liminf_{t \to 0^+}  \frac{- \int_{0}^t \| x(\tau;z) \|^{\frac{2L}{\alpha}+1} d\, \tau}{t} \\
= \ & \liminf_{t \to 0^+} - \| x(t;z)\|^{\frac{2L}{\alpha}+1} = - \|z\|^{\frac{2L}{\alpha}+1},
\end{align*}
and hence the Dini subderivative of $V$ is negative definite for almost every $z \in K$.
\end{proof}

\begin{proof}[Proof of Lemma~\ref{lem:discEst}]
By definition, the solution $x$ of system~\eqref{eq:compSys}, with $x(0) = z \in K$, satisfies
\begin{equation}\label{eq:xDotIneqLemma5}
\left\langle \dot x(t) - f(x(t)) , x(t) - y  \right\rangle \le 0, \quad \forall \, y \in K,
\end{equation}
for almost all $t \ge 0$.
For $h> 0$ small enough, introduce the function $\wt z: [0,h] \to \R^n$ given by
\[
\wt z (t) := z+t F(z) = z + t f(z) + t\eta_z,
\]
where $\eta_z$ is such that $\eta_z = 0$ for $z \in \inn (K)$ and $\eta_z \in \lccp(f(z), I, \cT_K(z))$ for $z \in \bd(K)$.
It is readily checked that $\wt z(t) \in K$ for all $t \in [0,h]$, and $\dot{\wt z} = \frac{d}{dt} \wt z(t) = F(z) = f(z)+ \eta_z$.
From the definition of $F(z)$, it follows that
$
\left\langle f(z) - \dot{\wt z} \, , \, \wt y - z \right\rangle \le 0$, for all $\wt y \in K$, 
or equivalently,
\[
\left\langle f(z) - \dot{\wt z} \, , \wt y - \wt z(t) \right \rangle \le \left\langle F(z) - f(z), \wt z(t) -z \right\rangle, \, \forall \, \wt y \in K.
\]
For $t > 0$ small enough, we have $\left(x(t) - \wt z(t) + t \wt z(t) \right) \in K$. Since $K$ is a cone, we can take $\wt y = \frac{1}{t} \left(x(t) - \wt z(t) + t \wt z(t) \right) \in K$ to get
\[
\left\langle f(z) - \dot{\wt z} \, , x(t) - \wt z(t) \right \rangle \le t \left\langle \eta_z, \wt z(t) -z \right\rangle.
\]
Taking $y = \wt z(t)$ in \eqref{eq:xDotIneqLemma5}, and adding it to the last inequality, we get
\[
\left\langle \dot x(t) - \dot{\wt z}, x(t)-\wt z (t) \right\rangle \le \left \langle f (x(t)) - f(z),  x(t) - \wt z(t) \right\rangle 
+ t \left\langle \eta_z, \wt z(t) -z \right\rangle.
\]
To bound the terms on the right-hand side, we observe that
\begin{align*}
\| f(z) - f(x(t)) \| &\le L \| z - x(t) \| \\
& \le L \| z - \wt z(t) \| + L \| \wt z (t) - x(t) \| \\
& \le L t \| F(z) \| + L \| \wt z (t) - x(t) \|.
\end{align*}
Using Proposition \ref{prop:boundEta} in the Appendix~\ref{app:basicsComp}, there is some constant $C_{\eta}$ such that
\begin{align*}
\left\langle \eta_z, \wt z(t) -z \right\rangle & = \left\langle \eta_z, t F(z) \right\rangle \le t \, \| \eta_z \| \, \| F(z) \|  \le C_{\eta} t \| F(z) \|.
\end{align*}
Consequently, we get
\begin{align*}
& \quad \frac{1}{2} \frac{d}{d t} \| x(t;z) - \wt z (t) \| ^2 = \left\langle \dot{\wt z} - \dot x(t), \wt z (t) - x(t) \right\rangle \\
& \le \| f (z) - f(x(t)) \| \cdot \| \wt z (t) - x(t) \| + C_{\eta} t^2 \| F(z) \| \\
& \le L \| z - x(t) \| \cdot \| \wt z (t) - x(t) \| + C_{\eta} t^2 \| F(z) \| \\
& \le L t \| F(z) \|  \| \wt z (t) - x(t) \| + L \| \wt z (t) - x(t) \| ^2 + C_{\eta} t^2 \| F(z) \| \\
& \le C_{1} \| \wt z (t) - x(t) \| ^2 + C_{2,z} t^2
\end{align*}
where we used Young's inequality for the product term $L t \| F(z) \| \cdot \| \wt z (t) - x(t) \|$, and chose $C_1 = (L+0.5L^2)$ and $C_{2,z} = \max\{0.5\| F(z)\|^2, C_{\eta} \| F(z) \| \}$. Solving the differential inequality, and using the fact that, $\wt z(0) = x(0)$, we get
\[
\| \wt z(t) - x(t) \|^2 \le 2 C_{2,z} \int_0^t \exp(2 C_1(t-s))  s^2\, ds .
\]
Solving the integral on the right, we get
\[
\| \wt z(t) - x(t) \|^2 \le C_{3,z} \, t^3 + o(t^3),
\]
for some $C_{3,z} > 0$, whence the estimate in \eqref{eq:estDiscConverse} follows.
\end{proof}

\subsection{Regularization of $V$}
The next step is to regularize $V$ so that we obtain a continuously differentiable Lyapunov function.

\begin{lem}\label{lem:smoothReg}
Under Assumption~\ref{lip}, if the origin is globally exponentially stable for system~\eqref{eq:compSys}, then there exists a continuously differentiable cone-copositive Lyapunov function.
\end{lem}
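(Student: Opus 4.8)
The plan is to obtain the required continuously differentiable Lyapunov function by \emph{regularizing} (smoothing) the locally Lipschitz function $V$ of \eqref{eq:defVConv}. At this point we have already established everything except smoothness: $V$ is locally Lipschitz, it is in fact nonnegative on all of $\R^n$ (being an integral of norms), the class $\cK_\infty$ bounds of item~1) of Definition~\ref{def:funcLyap} hold on $K$, and by Lemma~\ref{lem:calcGradae} its Dini subderivative satisfies $DV(z;F(z)) \le -\norm{z}^{\frac{2L}{\alpha}+1}$ for every $z \in K$, where $F(z) \in f(z)-\cN_K(z)$ is the selected right-hand side. Writing $p := \tfrac{2L}{\alpha}+1$, and recalling that a $C^1$ function $\wt V$ satisfies $D\wt V(z;F(z)) = \inprod{\nabla \wt V(z)}{F(z)}$, the goal is to produce a smooth approximation of $V$ that inherits a strict decrease in the direction $F(z)$ together with the $\cK_\infty$ bounds and cone-copositivity; this verifies both inequalities in \eqref{gradLyap} at once.

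First I would mollify: for a standard mollifier $\rho_\delta$ supported in the ball of radius $\delta$, set $V_\delta := V \ast \rho_\delta$, which is $C^\infty$, converges to $V$ uniformly on compact sets, and (by Rademacher's theorem) has $\nabla V_\delta(z) = \int \nabla V(z-y)\,\rho_\delta(y)\,dy$. Nonnegativity of $V$ is preserved by averaging, so each $V_\delta$ is automatically cone-copositive, and the $\cK_\infty$ bounds transfer up to $O(\delta)$ corrections via Lemma~\ref{lem:contIniCond}; these are routine. The substantive point is the decrease. On the set $\{z \in \inn(K) : \operatorname{dist}(z,\bd(K)) > \delta\}$ the ball $B(z,\delta)$ lies in $\inn(K)$, where $F\equiv f$ is Lipschitz; splitting $\inprod{\nabla V_\delta(z)}{f(z)} = \int \inprod{\nabla V(z-y)}{f(z-y)}\rho_\delta(y)\,dy + \int \inprod{\nabla V(z-y)}{f(z)-f(z-y)}\rho_\delta(y)\,dy$ and using $\inprod{\nabla V(z-y)}{f(z-y)} = DV(z-y;f(z-y)) \le -\norm{z-y}^{p}$ at points of differentiability, together with $\norm{f(z)-f(z-y)} \le L\delta$ and the local Lipschitz bound on $V$, yields $\inprod{\nabla V_\delta(z)}{f(z)} \le -\tfrac12\norm{z}^{p}$ whenever $\delta$ is small relative to $\norm{z}$ and to $\operatorname{dist}(z,\bd(K))$.

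Because the admissible $\delta$ must shrink both as $z\to 0$ and as $z\to\bd(K)$, a single global scale does not suffice, so I would pass to a \emph{position-dependent} smoothing: cover $K\setminus\{0\}$ by countably many sets on each of which a fixed scale $\delta_i$ makes the previous estimate valid, take a smooth partition of unity $\{\psi_i\}$ subordinate to this cover, and set $\wt V := \sum_i \psi_i\, V_{\delta_i}$. Then $\wt V$ is $C^1$ (indeed $C^\infty$ away from the origin) and cone-copositive, the cross terms $\sum_i (\nabla\psi_i)(V_{\delta_i}-V)$ (which use $\sum_i \nabla\psi_i = 0$) are made negligible by choosing each $V_{\delta_i}$ uniformly close to $V$, and one obtains $\inprod{\nabla\wt V(z)}{f(z)} \le -\tfrac13\norm{z}^{p}$ for all $z\in\inn(K)$; the $\cK_\infty$ bounds and smoothness together with the lower bound at the origin are checked along the way, all estimates being homogeneous in $\norm{z}$. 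By continuity of $\nabla\wt V$ this inequality extends to $z\in\bd(K)$, giving the first inequality in \eqref{gradLyap} and the $f$-part of the second.

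It then remains to absorb the discontinuous boundary term, i.e. to show $\inprod{\nabla\wt V(x)}{\eta}\le 0$ for $x\in\bd(K)$ and $\eta\in\lccp(f(x),I,\cT_K(x))\subset K^\star$, which combined with the previous display yields the second inequality in \eqref{gradLyap}. This is the step I expect to be the main obstacle, since it is exactly where the jump of the selected field across $\bd(K)$ enters and where symmetric mollification mixes data from inside and outside $K$. The resolution I would use exploits the projection structure of the value function: since $V = G\circ\proj_K$ with $G(\bar z) := \int_0^\infty \norm{x(\tau;\bar z)}^{p}\,d\tau$, and since $\proj_K(x + t(-\eta)) = x$ whenever $-\eta\in\cN_K(x)$, the map $V$ is \emph{constant along outward normal directions}; hence for the regularization one finds $\inprod{\nabla\wt V(x)}{\eta}\le O(\delta_x)$, which is negligible against $-\tfrac13\norm{x}^{p}$ under the position-dependent scale once we note $\norm{\eta}\le C_\eta\norm{f(x)}\le C_\eta L\norm{x}$ (Proposition~\ref{prop:boundEta}). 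Making this interplay of the projection structure with the position-dependent smoothing precise near $\bd(K)$, and compatible near the origin where $\norm{x}^{p}$ degenerates faster than $\norm{x}$, is the crux of the argument. Once both inequalities in \eqref{gradLyap} hold and the $\cK_\infty$ bounds are in place, $\wt V$ is the required continuously differentiable cone-copositive Lyapunov function, completing the proof of Lemma~\ref{lem:smoothReg} and hence of Theorem~\ref{thm:converseMain}.
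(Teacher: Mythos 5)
Your overall template (mollify $V$, patch with a partition of unity, fix the origin at the end) matches the paper's, but your argument has a genuine gap at exactly the step you yourself flag as the ``crux'', and the paper's proof avoids that step entirely. The missing idea is this: the function $V$ of \eqref{eq:defVConv} is already defined on all of $\R^n$ through the projection, $V(w)=V(\proj_K(w))$, and Lemma~\ref{lem:calcGradae} gives the Dini decrease $DV(z;F(z))\le-\|z\|^{\frac{2L}{\alpha}+1}$ along the \emph{full} selected field $F=f+\eta$ at \emph{every} $z\in K$, boundary points included. The paper therefore mollifies this projected $V$ and writes, with $\bar z_y=\proj_K(z-y)$,
\[
\left\langle\nabla V_\sigma(z),F(z)\right\rangle=\int_{\R^n}\left\langle\nabla V(\bar z_y),F(\bar z_y)\right\rangle\psi_\sigma(y)\,dy+\int_{\R^n}\left\langle\nabla V(\bar z_y),F(z)-F(\bar z_y)\right\rangle\psi_\sigma(y)\,dy,
\]
bounding the first integral by Lemma~\ref{lem:calcGradae} and the second by the Lipschitz continuity of $f$ together with the bound on $\eta$ from Proposition~\ref{prop:boundEta}. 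This treats interior and boundary points uniformly with a \emph{single} smoothing scale on each compact set away from the origin; the locally finite cover, the $\cC^1$ partition of unity, and the final composition $W=\beta\circ\overline V$ are needed only because the estimates degenerate at the origin, not at $\bd(K)$.

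By contrast, you establish the decrease only along $f$ at points whose distance to $\bd(K)$ exceeds the smoothing scale, and you then need (i) scales $\delta_i\to 0$ as one approaches $\bd(K)$ and (ii) a separate argument for $\left\langle\nabla\wt V(x),\eta\right\rangle$ on the boundary. Both steps are unresolved and problematic. For (i): a cover by sets ``on each of which a fixed scale makes the previous estimate valid'' cannot contain any point of $\bd(K)$, so as described $\wt V$ is neither constructed nor controlled on the boundary; and if you instead force the scales to vanish at $\bd(K)$, then the $C^1$ regularity of $\wt V$ \emph{up to} $\bd(K)$ --- which your ``extension by continuity of $\nabla\wt V$'' presupposes --- is precisely the kind of property that fails under vanishing scales (it is the same degeneration that forces the special composition trick at the origin). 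For (ii): the projection invariance $V(x-t\eta)=V(x)$ holds along $-\eta$ \emph{at the point} $x$, but $\nabla V_{\delta}(x)$ averages $\nabla V$ over nearby points $x-y$, at which $-\eta$ is generally not a normal direction (for instance $x-y\in\inn(K)$, where the normal cone is trivial); there $\left\langle\nabla V(x-y),\eta\right\rangle$ is only bounded by the local Lipschitz constant of $V$ times $\|\eta\|$, not by $O(\delta)$, so your claimed estimate $\left\langle\nabla\wt V(x),\eta\right\rangle\le O(\delta_x)$ does not follow. Repairing your argument essentially amounts to rediscovering the paper's device: mollify the projected $V$ and invoke Lemma~\ref{lem:calcGradae} with the full field $F$ (not just $f$) at the projected points $\proj_K(z-y)$.
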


\begin{proof}[Proof of Lemma~\ref{lem:smoothReg}]
Using the function $V$ in \eqref{eq:defVConv} as a template, we introduce
\begin{align*}
V_\sigma (z) & := \int_{\R^n} V (z-y) \psi_\sigma (y) dy \\
& = \int_{\R^n} V (\proj_K(z-y)) \psi_\sigma (y) dy
\end{align*}
where $\psi_\sigma$, $\sigma \in (0,1)$, is the so-called mollifier that satisfies: $\psi_{\sigma}\in \mathcal{C}^\infty(\R^n, \R_+)$, $\textrm{supp}(\psi_\sigma)\subset \B(0,\sigma)$, and $\int_{\R^n} \psi_\sigma(y) dy = 1$.
It follows from standard texts in functional analysis, see for example \cite[Proposition 4.21]{Brez10}, that $V_\sigma$ is continuously differentiable and for every $\eps > 0$ and a compact set $\cU_c$, there exists $\overline \sigma > 0$, such that for every $\sigma \in (0,\overline \sigma)$, we get $\vert V(x) - V_\sigma (x) \vert  < \eps$ for each $x \in \cU_c$. Next, we show that $\langle \nabla V_\sigma(z), F(z)\rangle$ approximates $D V(z,F(z))$, for $z \in K$.
Indeed, for a given $y \in \R^n$, and $z \in K$, let $\bar z_y = \proj_K(z-y)$. It then follows that\footnote{Since $V$ is locally Lipschitz, its gradient $\nabla V$ exists almost everywhere and the value of the integral on the right-hand side is not affected by the value of $\nabla V$ on a set of Lebesgue measure zero.}
\begin{align*}
\left \langle \nabla V_\sigma(z), F(z) \right \rangle & = \int_{\R^n} \left \langle \nabla V (\bar z_y), F(\bar z_y) \right \rangle \psi_\sigma (y) dy \\
& \quad  + \int_{\R^n} \left \langle \nabla V (\bar z_y), F(z) - F(\bar z_y) \right \rangle \psi_\sigma (y) dy \\
& \le - \| z\|^{\frac{2L}{\alpha}+1} + \eps + C \int_{\B(0,\sigma)} \! \! \! \! \| \nabla V (\bar z_y) \| \| y \| dy
\end{align*}
where the bound on the first integral is due to Lemma~\ref{lem:calcGradae}, and the bound on the second integral is obtained from the Lipschitz continuity of $f$ and that of $\eta$ given in Proposition~\ref{prop:boundEta} in Appendix~\ref{app:basicsComp}. Thus, on each compact set excluding the origin, we can find a function $V_\sigma$ such that $\left \langle \nabla V_\sigma(z), F(z) \right \rangle$ is negative definite.

Let us now consider $\{\cU_i\}_{i\in \Nz}$ to be a locally finite open cover of $\R^n\setminus \{ 0 \}$ with $\cU_i$ bounded and $0 \not \in \cl(\cU_i)$, for each $i \in \Nz$. Let $\{\chi_i\}_{i \in \Nz}$ be a subordinated $\cC^1$ partition of unity. For each $i \in \Nz$, and $\eps_i > 0$, we can choose the function $V_i$ such that $\vert V(x) - V_i(x) \vert < \eps_i$, and $\left \langle \nabla V_i(x), F(x) \right \rangle$ is negative, for each $x \in \cl(\cU_i)$. Let $\overline V:\R^n \to \R_+$ be such that $\overline V(0) = 0$ and $\overline V(x) := \sum_{i \in \Nz} \chi_i(x) V_i(x)$ for $x \neq 0$, then following the analysis in \cite[Pages~106-108]{ClarLedy98}, it is seen that $\overline V$ is a cone-copositive Lyapunov function which is $\cC^1$ on $\R^n \setminus \{0\}$, and continuous at $\{0\}$. Finally, to achieve differentiability at the origin, we can introduce a positive definite function $\beta : \R_+ \to \R_+$ with $\beta'(s) > 0$ for each $s > 0$ such that $W(x) = \beta(\overline V(x))$ is a continuously differentiable cone-copositive Lyapunov function with respect to $K$.
\end{proof}

\begin{rem}
The construction given in the proof of Lemma~\ref{lem:smoothReg} actually gives a $\cC^\infty(\R^n, \R)$ Lyapunov function. This regularization technique is inspired by \cite{ClarLedy98}, and has also been used for smoothening of locally Lipschitz Lyapunov functions for hybrid systems \cite[Chapter~7]{GoebSanf12} and switched systems \cite{DellaTanw19}.
\end{rem}



\section{Homogeneous and Polynomial Lyapunov Approximations}\label{HomPolLyap}

For numerical purposes, it is useful to show the existence of homogeneous Lyapunov functions, and if possible, polynomial Lyapunov functions. In this section, we address the question whether there exist Lyapunov functions with these additional properties.

\subsection{Homogeneous Lyapunov Functions}

First, we show that the previous developments can be generalized to construct a homogeneous Lyapunov function when the vector field $f$ in the system description~\eqref{eq:compSys} is homogeneous. 

\begin{defn} 
The vector field $f$ is homogeneous of degree $d \ge 1$ if
\[
f(\lambda x) = \lambda^d f(x).
\]
for each $x \in K$ and $\lambda\geq 0$.
\end{defn}

The next two statements are generalizations of results given in \cite{Ros92}.

\begin{prop}
Under Assumption \ref{lip}, if the origin is locally exponentially stable for system \eqref{eq:compSys} with $f$ homogeneous, then it is also globally exponentially stable. 
\end{prop}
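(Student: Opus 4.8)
The plan is to exploit the homogeneity of $f$ to establish a scaling relation between solutions starting from $x_0$ and those starting from $\lambda x_0$, and then use this relation together with the local exponential bound to obtain a global one. First I would observe that homogeneity of $f$ of degree $d$, combined with the fact that $K$ is a cone and the normal cone $\cN_K$ is positively homogeneous of degree zero (i.e. $\cN_K(\lambda x) = \cN_K(x)$ for $\lambda > 0$), should yield a corresponding scaling of trajectories. Concretely, I expect that if $x(\cdot; x_0)$ solves \eqref{eq:compSys}, then for $\lambda > 0$ the rescaled curve $\lambda\, x(\lambda^{d-1} t; x_0)$ solves the system with initial condition $\lambda x_0$. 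One checks this by substituting into the differential inclusion: the time-scaling factor $\lambda^{d-1}$ compensates for the degree-$d$ homogeneity of $f$, while the normal-cone term scales consistently because $\cN_K$ is invariant under positive scaling of its argument. The complementarity relation $K^\star \ni \eta \perp x \in K$ is also preserved under this rescaling.

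Assuming local exponential stability, there exist $c_0, \alpha > 0$ and a neighborhood, say $\|x_0\| \le r$ with $x_0 \in K$, on which $\|x(t; x_0)\| \le c_0 e^{-\alpha t}\|x_0\|$. Given an arbitrary $x_0 \in K$ with $\|x_0\|$ possibly large, I would pick $\lambda$ small enough that $\lambda x_0$ lies in the region of local exponential stability (for instance $\lambda = r/\|x_0\|$), apply the local estimate to the trajectory from $\lambda x_0$, and then undo the rescaling. The scaling identity $x(t; x_0) = \lambda^{-1} x(\lambda^{d-1} t; \lambda x_0)$ transfers the exponential decay bound back to the original trajectory. The key point is that the \emph{multiplicative} constant $c_0$ is scale-invariant under this homogeneity argument, so it survives the transfer, while the time reparametrization only affects the rate in a controlled way.

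The main subtlety, and where I would focus the most care, is the effect of the time-scaling $\lambda^{d-1}$ on the decay rate. When $d = 1$ the time scale is unaffected and the argument is transparent: the same $\alpha$ works globally. When $d > 1$, rescaling by a small $\lambda$ contracts the time axis, and one must verify that the resulting global bound is still a genuine exponential $\|x(t; x_0)\| \le c_0 e^{-\alpha' t}\|x_0\|$ for some fixed $\alpha' > 0$ independent of $x_0$, rather than a bound whose rate degrades with $\|x_0\|$. I expect that because $\lambda^{d-1} \le 1$ for small $\lambda$, the decay is only slowed, and a uniform positive rate can still be extracted; the cleanest route may be to argue local exponential stability already forces $d = 1$ for the relevant scaling, or to carry the $\lambda$-dependence explicitly and show it combines into a fixed rate. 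I would therefore treat the verification of a \emph{uniform} exponential rate across all scales as the crux, after first nailing down the trajectory-scaling identity rigorously — the latter requiring a careful check that uniqueness of solutions (guaranteed under Assumption~\ref{lip}) lets us identify the rescaled curve with the genuine solution from $\lambda x_0$.
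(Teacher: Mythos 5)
Your proposal follows the same route as the paper: prove the trajectory-scaling identity (if $x(\cdot;x_0)$ solves \eqref{eq:compSys}, then $t \mapsto \lambda x(\lambda^{d-1}t;x_0)$ is the solution from $\lambda x_0$, using homogeneity of $f$, the invariance $\cN_K(\lambda x)=\cN_K(x)$ for $\lambda>0$, and uniqueness of solutions), and then transfer the local estimate to arbitrary initial conditions by scaling down into the region of local exponential stability. The paper verifies the identity through the variational-inequality characterization $\langle \dot y - f(y), z-y\rangle \ge 0$ for all $z\in K$ (writing $z=\lambda\bar z$ with $\bar z \in K$), which is the rigorous version of the check you sketch; in fact, on the transfer step you are \emph{more} careful than the paper, whose proof only concludes that solutions from scaled-up initial conditions stay bounded and converge to the origin, without tracking the exponential rate.

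The one genuine flaw is the direction of your time reparametrization, and it is exactly the point you flag as the crux. The correct way to read the identity for a large initial condition $x_0$ with $\lambda = r/\|x_0\| \le 1$ is $x(t;x_0) = \lambda^{-1}\, y(\lambda^{1-d}t)$, where $y(\cdot)$ is the solution from $\lambda x_0$. The internal clock of the small trajectory runs at speed $\lambda^{1-d} = (1/\lambda)^{d-1} \ge 1$, so the local bound $\|y(t)\| \le c_0 e^{-\alpha t}\lambda\|x_0\|$ transfers to $\|x(t;x_0)\| \le c_0 e^{-\alpha \lambda^{1-d} t}\|x_0\| \le c_0 e^{-\alpha t}\|x_0\|$: for $d>1$ the decay of large solutions is \emph{accelerated}, not ``only slowed'' as you write, and the uniform rate $\alpha$ survives with no further argument. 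This matters because under your reading (rate $\alpha\lambda^{d-1}$, degrading as $\|x_0\|\to\infty$) no uniform rate could be extracted by this route, so the ``expectation'' as you phrased it would fail. Your fallback is also sound — the same scaling argument shows local exponential stability is incompatible with $d>1$ for nontrivial solutions, so effectively $d=1$ — but the one-line computation above makes it unnecessary and closes the proof completely.
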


\begin{proof}
We first show that if $x:[0,\infty) \to K$ is a solution that satisfies \eqref{eq:compSys} starting with initial condition $x_0$, then for each $\lambda \ge 0$ and $t \ge 0$, the function $y(t) = \lambda x(\lambda^{d-1} t)$ is also a solution to system \eqref{eq:compSys} starting with initial condition $\lambda x_0$.
It follows by inspection that $y(t) \in K$, for each $t \ge 0$. Noting that for each $z\in K$, and $\lambda > 0$, there exists $\overline z \in K$ such that $z = \lambda \overline z$, we get
\begin{align*}
& \quad \left\langle \dot y(t) - f(y(t)), z - y(t) \right\rangle \\
 & = \left\langle \lambda^{d} \dot x(\lambda^{d-1} \, t) - f(\lambda x(\lambda^{d-1} \, t)), z -  \lambda x(\lambda^{d-1} \, t) \right\rangle \\
& = \lambda^{d} \left\langle \dot x(\lambda^{d-1} \, t) - f(x(\lambda^{d-1} \, t)), \lambda \overline z -  \lambda x(\lambda^{d-1} \, t) \right\rangle \\
& = \lambda^{d+1} \left\langle \dot x(\lambda^{d-1} \, t) - f(x(\lambda^{d-1} \, t)), \overline z -  x(\lambda^{d-1} \, t) \right\rangle \ge 0,
\end{align*}
and hence $\dot y(t) - f(y(t)) \in -\cN_K(y(t))$ for almost every $t \ge 0$.

Since the origin is locally exponentially stable, there is an open set relative to $K$, say $\cR_0$, such that for each $x(0) \in \cR_0$, the corresponding solution $x$ converges to the origin. For an initial condition $y(0) \not \in \cR_0$, there is a constant $\lambda > 0$ such that $y(0) = \lambda x(0)$, with $x(0)\in \cR_0$. Since the solutions are unique, the above reasoning shows that the solution starting from $y(0)$ stays within a bounded set and converges to the origin.
\end{proof}

The next result allows us to construct a homogeneous Lyapunov function under local exponential stability. The proof is inspired from \cite{Ros92} and is provided in Appendix~\ref{app:proofHomo} just for the sake of completeness.
\begin{prop}\label{prop:homogLyapfct}
Consider dynamical system \eqref{eq:compSys} with $f$ homogeneous and the origin locally exponential stable. Let $W \in \cC^\infty(\R^n, \R)$ be a  cone-copositive Lyapunov function for \eqref{eq:compSys}. Let $a \in \cC^\infty (\R,\R)$ be such that 
\begin{equation}
   a=
    \begin{cases}
      0 & \text{on}\ (-\infty,1], \\
      1 & \text{on}\ [2,\infty),
    \end{cases}
  \end{equation}
 and $\nabla a(s) \ge 0$, for each $s \in \R$. Let $k$ be a positive integer. Then the function
 \begin{equation}
   \overline{W}(x)=
    \begin{cases}
      \int_{0}^{\infty} \frac{1}{\;\lambda^{k+1}} (a \circ W)(\lambda x)  \, d\lambda & \text{if}\ x \in \R^n\backslash\lbrace{0}\rbrace, \\
      0 & \text{if}\ x=0,
    \end{cases}
  \end{equation}
is a cone-copositive Lyapunov function of class $\cC^{k-1}$ on $\R^n\backslash\lbrace{0}\rbrace$, and it satisfies
\[
\overline{W}(s x)=s^k\overline{W}(x) 
\]
 for all $x \in \R^n\backslash\lbrace{0}\rbrace$ and $s > 0$.
\end{prop}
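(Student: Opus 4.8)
The plan is to follow the dilation-averaging construction behind \cite{Ros92}: averaging the rescaled, truncated function $(a\circ W)(\lambda x)$ against the weight $\lambda^{-(k+1)}$ produces a function that is automatically positively homogeneous of degree $k$, that inherits positivity and the decrease property from $W$, and that is smooth because the cutoff $a$ confines the ``active'' part of the integrand to a compact range of $\lambda$. First I would check that the integral defining $\overline{W}(x)$ converges for every $x \neq 0$. Since $W$ is a cone-copositive Lyapunov function it obeys $\underline\alpha(\|z\|) \le W(z) \le \overline\alpha(\|z\|)$ for some $\underline\alpha, \overline\alpha \in \cK_\infty$. Hence, for fixed $x \neq 0$ and $\lambda$ small, $W(\lambda x) \le \overline\alpha(\lambda\|x\|) \le 1$, so $(a\circ W)(\lambda x) = 0$ and the singularity of $\lambda^{-(k+1)}$ at $\lambda = 0$ is harmless; for $\lambda$ large, $W(\lambda x) \ge \underline\alpha(\lambda\|x\|) \ge 2$, so $(a\circ W)(\lambda x) = 1$ and the tail integrand reduces to $\lambda^{-(k+1)}$, which is integrable at infinity because $k \ge 1$. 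Thus $\overline{W}(x)$ is a well-defined nonnegative number.

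The homogeneity $\overline{W}(sx) = s^k \overline{W}(x)$ follows at once from the substitution $\mu = s\lambda$, since $d\lambda = s^{-1}\,d\mu$ and $\lambda^{-(k+1)} = s^{k+1}\mu^{-(k+1)}$ combine to produce the factor $s^k$. Cone-copositivity and the class-$\cK_\infty$ sandwich are then cheap: the integrand is nonnegative, so $\overline{W}\ge 0$ on $K$, and it equals $\lambda^{-(k+1)}>0$ on a whole $\lambda$-interval where $(a\circ W)(\lambda x)=1$, so $\overline{W}(x) > 0$ for $x \in K\setminus\{0\}$; combining this strict positivity and the continuity of $\overline{W}$ on the compact set $\{\|x\|=1\}\cap K$ with degree-$k$ homogeneity yields constants $0 < m \le M$ with $m\|x\|^k \le \overline{W}(x) \le M\|x\|^k$, which is item~1) of Definition~\ref{def:funcLyap} with $\underline\alpha(r)=mr^k$ and $\overline\alpha(r)=Mr^k$.

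For the regularity I would differentiate under the integral sign. Every $x$-derivative of $(a\circ W)(\lambda x)$ produces a factor $a'(W(\lambda x))$, which vanishes unless $W(\lambda x)\in[1,2]$; by the $\cK_\infty$ bounds this confines $\lambda$ to an interval $[\lambda_1,\lambda_2]\subset(0,\infty)$ that stays bounded and bounded away from $0$, locally uniformly in $x$. On that compact interval the integrand and each of its $x$-derivatives are dominated by an integrable function, so Leibniz's rule applies repeatedly; this gives the claimed $\cC^{k-1}$ regularity (in fact $\cC^\infty$, since $W\in\cC^\infty$) on $\R^n\setminus\{0\}$.

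I expect the infinitesimal decrease to be the main obstacle. Here I would exploit the scaling argument from the preceding proposition: if $x(\cdot)=x(\cdot;x_0)$ solves \eqref{eq:compSys}, then $\lambda x(t) = y_\lambda(\lambda^{1-d}t)$, where $y_\lambda$ is the solution started from $\lambda x_0$. Consequently, for each fixed $\lambda$ the map $t\mapsto (a\circ W)(\lambda x(t))$ is nonincreasing, because $a$ is nondecreasing and $W$ decreases along $y_\lambda$. Differentiating in $t$, integrating the resulting nonpositive integrand against $\lambda^{-(k+1)}$, and using that on the transition set $\{\lambda : 1 < W(\lambda x(t)) < 2\}$ one has $a'>0$ together with the strict decrease of $W$, yields $\tfrac{d}{dt}\overline{W}(x(t)) < 0$ whenever $x(t)\neq 0$. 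The delicate points are the interchange of $\tfrac{d}{dt}$ with $\int_0^\infty$ (justified by the same compact-support argument as above) and the complementarity term $\eta$ on $\bd(K)$: since $\cT_K(\lambda x)=\cT_K(x)$ and $f$ is homogeneous of degree $d$, the selection $\eta \in \lccp(f(x),I,\cT_K(x))$ scales as $\lambda^d$ on the cone, so $F(\lambda x)=\lambda^d F(x)$ and $\langle \nabla \overline{W}(x), F(x)\rangle$ is homogeneous of degree $k+d-1$ and strictly negative on $K\setminus\{0\}$. Minimizing over $\{\|x\|=1\}\cap K$ then gives the class-$\cK$ bound $\langle \nabla\overline{W}(x),F(x)\rangle \le -c\,\|x\|^{k+d-1}$, establishing item~2) of Definition~\ref{def:funcLyap} and completing the proof.
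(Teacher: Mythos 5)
Your proposal is correct and follows essentially the same route as the paper's own proof: Rosier's dilation-averaging construction, made to work in the constrained setting by the key observation that the complementarity multiplier scales homogeneously, $\eta_{\lambda x} = \lambda^d \eta_x$ (Proposition~\ref{prop:solScaleLcp}), combined with differentiation under the integral (legitimate because the differentiated integrand is supported in a compact $\lambda$-interval) and the sign conditions $\nabla a \ge 0$, $\langle \nabla W, f + \eta \rangle < 0$. Your handling of well-posedness of the integral, the class-$\cK_\infty$ sandwich, and the final class-$\cK$ decrease bound is in fact somewhat more explicit than the paper's sketch.
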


\subsection{Polynomial Approximation}

For the class of numerical algorithms that we will propose in the next section, it is important to show that the cone-copositive Lyapunov functions of \eqref{eq:compSys} can actually be approximated by polynomial functions. Among the existing results in this direction, it is seen that the existence of polynomial Lyapunov functions has been shown under certain restrictions only. In \cite{Peet09}, the authors use generalizations of the Weierstrass approximation theorem for nonlinear systems with smooth vector fields to show existence of polynomial Lyapunov functions on compact sets for exponentially stable systems. In case of switched systems,  the existence of polynomial Lyapunov functions has been proven in \cite{MasoBosc06} when the solution maps (parameterized by time) are linear functions of the initial condition. 
Such methods cannot be generalized here because our vector fields are not even continuous, and even with $f$ linear in \eqref{eq:compSys}, the resulting solution maps for the complementarity systems are nonlinear and hence nonconvex. As an example of this last observation, we consider the following example:

\begin{exam}[Constraints make the solution space nonlinear]
Let $f(x) = Ax$ with $A = \begin{bsmallmatrix}0 & 1\\-1 & 0\end{bsmallmatrix}$ and $K = \R_{+}^2$ and let $x_1(0) = (a, 0)^\top$ and $x_2(0) = (0,b)^\top$. Let $x_i:\R\to \R^2$ be the solution starting with initial condition $x_i(0)$, $i = 1,2$, and $z$ denote the solution starting with initial condition $x_1(0) + x_2(0)$. It can be checked that $z(t)$ does not equal $x_1(t) + x_2(t)$, for any $t > 0$ because we have $x_1(t)=x_1(0)$ for $t \ge 0$, $x_2(t)=e^{At}x_2(0)=(b\sin(t), b\cos(t))$ which gives us $x_1(t)+x_2(t)=(a+b\sin(t), b\cos(t))$, but we have $z(t)=e^{At}(x_1(0)+x_2(0))=(a\cos(t)+b\sin(t), -a\sin(t)+b\cos(t))$ which is not equal to $x_1(t)+x_2(t)$ for $a,b \ne 0$.
\end{exam}

These discussions and the example suggest that it may not be possible to find a homogeneous polynomial approximation to the Lyapunov function proposed in Theorem~\ref{thm:converseMain}. Due to lack of any known results on density of homogeneous polynomials in the class of differentiable functions, we enlarge our search to rational functions whose numerator and denominator are homogeneous polynomials. For such functions, we have the following density result \cite[Lemma~2.1]{AhmKha19}:




\begin{prop} \label{prop:PolynApprox}
Let $W \in \cC^1(\R^n;\R_+)$ be a homogeneous function of degree $d$ and $\epsilon >0$ be a given scalar. There exist an even integer $r$ and a homogeneous polynomial $p$ of degree $r+d$, such that
\[
\max \left\{\max_{x\in S^{n-1}} \left | \wt W(x) \right |, \max_{x\in S^{n-1}} \left\| \nabla \wt W(x)\right\| \right\} \le \epsilon
\]
where $S^{n-1}$ denotes the unit sphere in $\R^n$ and $\wt W(x) = W(x) - \frac{p(x)}{\|x\|^r}$.
\end{prop}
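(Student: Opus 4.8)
The plan is to reduce the statement to a $\cC^1$--approximation problem on the unit sphere and then exploit homogeneity through Euler's identity to control the gradient. Note first that with $r$ even, $\norm{x}^r = (\norm{x}^2)^{r/2}$ is a polynomial, and since $p$ is homogeneous of degree $r+d$, the candidate $R(x):=p(x)/\norm{x}^r$ is homogeneous of degree $d$, exactly like $W$; hence $\wt W$ is itself homogeneous of degree $d$. For any such function Euler's relation gives $\inprod{\nabla \wt W(x)}{x}=d\,\wt W(x)$, so on $S^{n-1}$ the radial component of $\nabla \wt W$ has magnitude $d\,|\wt W(x)|$. Consequently, once $\sup_{S^{n-1}}|\wt W|$ is small, the full gradient error is controlled as soon as the tangential part is, and it suffices to approximate $W$ together with its gradient on a neighbourhood of the sphere by an object of the prescribed rational form.

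First I would invoke the $\cC^1$ version of the Weierstrass approximation theorem on the compact annulus $A:=\{x: \tfrac12\le\norm{x}\le 2\}$ (a standard result, e.g.\ via multivariate Bernstein operators): for any $\epsilon'>0$ there is a polynomial $q$, not necessarily homogeneous, with $\sup_A|W-q|<\epsilon'$ and $\sup_A\norm{\nabla W-\nabla q}<\epsilon'$. Because $W$ is homogeneous of degree $d$ it satisfies $W(-x)=(-1)^d W(x)$, so replacing $q$ by its symmetrization $q^s(x)=\tfrac12\big(q(x)+(-1)^d q(-x)\big)$ — again a polynomial, and without increasing the error on the symmetric set $A$ — I may assume that every homogeneous component $q_j$ of $q$ has degree $j\equiv d \pmod 2$.

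Next I would homogenize $q$ to degree $r+d$ without altering its values on the sphere. Fix an even integer $r$ with $r+d\ge\deg q$ and set $p(x):=\sum_j q_j(x)\,(\norm{x}^2)^{(r+d-j)/2}$. Since $r$ is even and $j\equiv d\pmod 2$, each exponent $(r+d-j)/2$ is a nonnegative integer, so $p$ is a genuine homogeneous polynomial of degree $r+d$; moreover on $S^{n-1}$ one has $R=p/\norm{x}^r=\sum_j q_j=q$, whence $|\wt W|=|W-q|<\epsilon'$ there. For the gradient I would split $\nabla W-\nabla R=(\nabla W-\nabla q)+(\nabla q-\nabla R)$. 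The first term is below $\epsilon'$ by construction. A direct computation of $\nabla R=\sum_j q_j\nabla(\norm{x}^{d-j})+\norm{x}^{d-j}\nabla q_j$ shows that on $S^{n-1}$ the second term is purely radial, equal to $-\big(\sum_j (d-j)q_j\big)x$; applying Euler's identity to each $q_j$ and to $W$ rewrites its coefficient as $d(q-W)+\inprod{\nabla W-\nabla q}{x}$, which is bounded by $(d+1)\epsilon'$ on the sphere. Hence $\norm{\nabla\wt W}\le(d+2)\epsilon'$ on $S^{n-1}$, and choosing $\epsilon'=\epsilon/(d+2)$ yields the claim.

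The only off-the-shelf ingredient is the $\cC^1$ density of polynomials on a compact set, which is routine. The two points requiring genuine care are, first, realizing the approximant as a \emph{single} homogeneous polynomial of degree $r+d$ divided by $\norm{x}^r$, which forces the parity bookkeeping (handled by the symmetrization step), and second, taming the extra radial derivative terms that the factor $\norm{x}^{-r}$ injects into $\nabla R$. I expect this gradient control to be the main obstacle, and the key observation that resolves it is that these extra terms are radial and therefore governed by Euler's homogeneity relation rather than by any fresh approximation estimate.
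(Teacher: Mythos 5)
Your proof is correct, but a comparison with ``the paper's proof'' is moot: the paper does not prove Proposition~\ref{prop:PolynApprox} at all. It imports the statement as a known density result, citing Lemma~2.1 of \cite{AhmKha19}, and builds on it without reproducing the argument. Your blind proof is, in essence, a correct self-contained reconstruction of the proof given in that reference: (i) $\cC^1$ polynomial approximation of $W$ on a compact neighborhood of the sphere, (ii) parity correction via the symmetrization $q^s(x)=\tfrac12\bigl(q(x)+(-1)^d q(-x)\bigr)$, which is exactly what makes the graded components homogenizable by \emph{even} powers of $\norm{x}$, (iii) the homogenization $p(x)=\sum_j q_j(x)\,(\norm{x}^2)^{(r+d-j)/2}$, which agrees with $q$ on $S^{n-1}$, and (iv) Euler's identity to show that the spurious derivative terms injected by the factor $\norm{x}^{-r}$ are purely radial on the sphere and of size $O(\epsilon')$. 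Your gradient bookkeeping checks out: on $S^{n-1}$ one has $\nabla q-\nabla R=-\bigl(\sum_j(d-j)q_j\bigr)x$, and writing $\sum_j(d-j)q_j = d(q-W)+\inprod{\nabla W-\nabla q}{x}$ via Euler's relation for each $q_j$ and for $W$ gives the bound $(d+1)\epsilon'$, hence $(d+2)\epsilon'$ overall, so $\epsilon'=\epsilon/(d+2)$ suffices. The only value your write-up adds beyond the citation is precisely this self-containedness, which is a genuine plus given that the paper's later algorithms rest on this proposition.
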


With such a rational function in hand which approximates the homogeneous function from Proposition~\ref{prop:homogLyapfct} (in terms of value and gradient) to desired accuracy, one can establish the existence of a rational homogeneous cone-copositive Lyapunov function.

\section{Numerical Construction Using Convex Optimization}\label{sec:comp}

In the previous section, we motivated the need for computing cone-copositive homogeneous Lyapunov functions for the class of constrained dynamical systems \eqref{eq:compSys}. Proposition~\ref{prop:PolynApprox} suggests that for a certain class of complementarity systems, we can reduce our search of Lyapunov functions to the space of rational polynomial functions, where the denominator has a certain structure. By fixing the denominator, we reformulate our problem as finding the numerator in the form of polynomial which satisfies certain inequalities. We carry out the steps by specifying the inequalities that need to be satisfied, and the algorithms using convex optimization methods that can be implemented for computing such functions.

Just as a quick motivation for what follows, we remark that contrary to unconstrained linear systems, the following example shows that copositive Lyapunov functions cannot be simply obtained by solving a linear equation, and hence there is a need to develop tools for computing them.
\begin{exam}[Copositive Lyapunov functions are not obtained by solving linear equations]
Let $K=\R_{+}^2$ and
$A = \begin{bsmallmatrix} -1 & -2\\ -1 &-1\end{bsmallmatrix}$.
Let $H = \begin{bsmallmatrix} 1 & 0\\ 0 &1\end{bsmallmatrix}$
the identity matrix which is copositive on cone $K$. By solving the equation $A^\top G + GA =-H$, we obtain $G = \begin{bsmallmatrix} -1 & \frac{3}{4}\\ \frac{3}{4} & -\frac{1}{4}\end{bsmallmatrix}$ which is not copositive.
On the other hand, if we take for example the copositive matrix $\widetilde{H} = \begin{bsmallmatrix} 1 & 2\\ 2 & 1\end{bsmallmatrix}$, we obtain the copositive matrix $\widetilde{G} = \begin{bsmallmatrix} 1 & -\frac{1}{4}\\ -\frac{1}{4} & \frac{3}{4} \end{bsmallmatrix}$ by solving $A^\top \widetilde{G} + \widetilde{G}A =-\widetilde{H}$.

This example shows that for a given matrix $A$, we can have a copositive matrix $H$ without the existence of $G$ copositive verifying $A^\top G + GA =-H$ , but with existence of some $\widetilde{G}$ such that $-A^\top \widetilde{G}- \widetilde{G} A$ is copositive.
\end{exam}

We now establish the inequalities which will be used in our algorithms to find copositive homogeneous Lyapunov functions. We restrict our attention to full-dimensional polyhedral cones, that is, $K=\left\{x\in \R^n \vert Fx \ge 0\right\}$ with non-empty interior. By using Proposition~\ref{prop:PolynApprox}, let
\[
V(x) = \frac{h(x)}{(\sum_{i=1}^{n} x_i^2)^r} = \frac{h(x)}{\|x\|_{2}^{2r}}
\]
where $r$ is a nonnegative integer, and $h(\cdot)$ is a homogeneous polynomial of degree at least $2r+1$, copositive on $K$. Here, we used the notation that $x=(x_{1},x_{2},\dots,x_{n})^\top \in K$.

As we know, finding such Lyapunov function is equivalent to finding $V$ that satisfies the inequalities:
\begin{subequations} 
\begin{gather}
V(x)= \frac{h(x)}{\|x\|_{2}^{2r}} \ge 0, \enspace \forall x \in K \backslash \lbrace 0 \rbrace \\
-\left\langle \nabla V(x), f(x)+ \eta \right\rangle \ge 0, \enspace \forall x \in K \backslash \lbrace 0 \rbrace
\end{gather}
\end{subequations}
where
\begin{align*}
-\left\langle \nabla V(x), f(x)+ \eta \right\rangle 
 = \frac{-\|x\|_{2}^2 \left\langle \nabla h(x), f(x)+ \eta \right\rangle +2rh(x) \left\langle x, f(x)+ \eta \right\rangle}{\|x\|_{2}^{2(r+1)}}.
\end{align*}
with $\eta  = \lccp(f(x), I, \cT_K(x))$.
The numerator is denoted by
\[
s(x) :=-\|x\|_{2}^2 \left\langle \nabla h(x), f(x)+ \eta \right\rangle +2rh(x) \left\langle x, f(x)+ \eta \right\rangle
\]
which is a homogeneous polynomial if $h$ and $f$ are homogeneous polynomials.
So we have
\begin{subequations} 
\begin{gather}
V(x)= \frac{h(x)}{\|x\|_{2}^{2r}} \ge 0, \enspace \forall x \in K \backslash \lbrace {0} \rbrace \\
-\left\langle \nabla V(x), f(x)+ \eta \right\rangle = \frac{s(x)}{\|x\|_{2}^{2(r+1)}} \ge 0, \enspace \forall x \in K \backslash \lbrace {0} \rbrace.
\end{gather}
\end{subequations}
Thus, finding a copositive $V$ for system~\eqref{eq:compSys} with the structure imposed in Proposition~\ref{prop:PolynApprox} boils down to finding $h$ and $s$ such that
\begin{subequations} \label{stabcondts}
\begin{gather}
h(x) \ge 0, \enspace \forall x \in K \backslash \lbrace {0} \rbrace \\
s(x) \ge 0, \enspace \forall x \in K \backslash \lbrace {0} \rbrace.
\end{gather}
\end{subequations}
Since $\eta$ is nonzero only on the boundaries of $K$, we replace the second inequality in \eqref{stabcondts} by inequalities with respect to each face of polyhedron $K$.
Let $S_i := \{ x \in K \, \vert \, (Fx)_i = 0 \}$, $i \in \left\{1,\dots,n_K\right\}$ denote the faces of $K$. Let 
\[
s_i(x)=-\|x\|_{2}^2 \left\langle \nabla h(x), f(x)+ \eta_i \right\rangle +2rh(x) \left\langle x, f(x)+ \eta_i \right\rangle
\]
for all $x \in S_{i}$ where $\eta_{i}  = \lccp(f(x), I, \cT_K(x))$. In the interior of  $K$, we have $\eta=0$ so let 
\[ s_0(x)=-\|x\|_{2}^2 \left\langle \nabla h(x), f(x) \right\rangle +2rh(x) \left\langle x, f(x) \right\rangle.
\]
Consequently, the inequalities used for finding $V$ can be rewritten as follows:
\begin{subequations} \label{Algoineq}
\begin{gather}
 h(x) \ge 0, \enspace \forall x \in K \backslash \lbrace {0} \rbrace \\
 s_{0}(x) \ge 0, \enspace \forall x \in \inn (K \backslash \lbrace {0} \rbrace)\\
 s_{i}(x) \ge 0, \enspace \forall x \in S_{i}, \enspace i \in \left\{1,\dots,n_K\right\}.
\end{gather}
\end{subequations}

While computing cone-copositive Lyapunov functions $V$ using inequalities \eqref{Algoineq}, we notice that we are faced with two problems, which prevent us from using conventional SOS techniques. The first problem is that there is no readily available Positivstellensatz for unbounded domains like cones. The second problem is that our Lyapunov functions are not necessarily SOS because a SOS polynomial is in particular positive definite but our systems require searching for a Lyapunov function beyond positive definite functions.

To overcome these problems, we study two techniques for finding polynomials that satisfy \eqref{Algoineq}. In what follows, we assume that $K = \R_+^n$, that is, $K$ is the positive orthant with $n$ faces so that $S_i = \{x \in \R_+^n \, \vert \, x_i = 0\}$, $i = 1, \dots, n$. The more general case of polyhedral cones can be covered with state transformations or decompositions but such details are being avoided for the ease of exposition.
\subsection{Discretization Method}
The basic idea behind the discretization methods is to select a certain number of points in the cone $\R_+^n$ and evaluate the inequalities \eqref{Algoineq} with a certain polynomial function parameterized by finitely many unknowns.
This allows us to construct an inner approximation of copositive polynomials with respect to cone $\R_+^n$.
In the literature, there exist algorithms for checking copositivity of a matrix using discretization methods \cite{BunDur08,BunDur09}, \cite{Dur10}, and using a moment relaxation hierarchy \cite{NiYaZha18}. Here, we restrict ourselves to discretization schemes and generalize the existing algorithms for arbitrary nonlinear polynomials (not necessarily quadratic functions).

To describe this discretization algorithm, let us first consider the convex cone of copositive polynomials 
\begin{equation} \label{conecop}
\cC:=\left\{
g \in \R^d [x] \Bigg \vert\ \begin{aligned} & g \text{ is homogeneous and} \\ & g(x) \ge 0 \enspace \text{for all} \enspace x \in \R_+^n 
\end{aligned}\right\},
\end{equation}
where $\R^d[x]$ denotes the ring of polynomials of degree $d$, over the field of reals, in $x \in \R^n$.
We will establish an inner approximation of $\cC$ based on simplicial partitions inside cone $\R_+^n$. To do so, we first need to introduce {\em tensors}, which generalize the notion of a matrix, and will be used for compact representation of polynomials of our interest.

\begin{defn}
A tensor $\cA$ of order $d$ over $\R^n$ is a multilinear form
\begin{equation*}
\begin{array}{ccccc}
& \underbrace{\R^n \times \R^n \times \dots \times \R^n}_{d \text{ times}} & \to & \R \\
 & (x^1,x^2,\dots,x^d) & \mapsto & \cA[x^1,x^2,\dots,x^d] \\
\end{array}
\end{equation*}
where 
\[
\cA[x^1,x^2,\dots,x^d]= \sum_{i_1=1}^n \sum_{i_2=1}^n \sum_{i_d=1}^n a_{i_1,i_2,\dots,i_d} x^1_{i_1} \cdots x^d_{i_d}
\]
and $a_{i_1,i_2,\dots,i_d}$ corresponds to a real number from a table with $n^d$ entries, indexed by $i_1,i_2,\dots,i_d \in \{1, \dots, n\}$. We say that $\cA$ is symmetric if
\[
a_{i_1,i_2,\dots,i_d} = a_{j_1,j_2,\dots , j_d}
\]
whenever $i_1+i_2+\dots+i_d = j_1+j_2+\dots+j_d$, for all possible permutations $i_1,i_2,\dots,i_d$ and $j_1,j_2,\dots , j_d$ of $\{1, \dots, n\}$.
\end{defn}
A classic matrix $A \in \R^{n \times n}$ describes a tensor of order $2$ over $\R^n$, also called a quadratic form, where the coefficients of the quadratic form belong to a table with $n^2$ entries $a_{i,j}$ with $i,j=\{1,\dots, n\}$. 
A general homogeneous polynomial $g \in \R^d[x]$, with $d \ge 2$, can be written as
\[
g(x)=g(x_1,\dots,x_n)=\sum \limits_{\underset{i_{1}+\dots+i_{n}=d}{i=(i_1, \dots, i_n)}} a_{i} x_{1}^{i_1} \cdots x_{n}^{i_n}.
\]
Using the tensor representation, $g$ can also be compactly written in the form
\begin{equation}\label{tensorpol}
g(x)=G[\underbrace{x,x,\dots,x}_{\text{$d$ times}}]
\end{equation}
where $G$ is a symmetric tensor. The following lemma shows an equivalent expression for copositivity which we will consider all along this section.
\begin{lem}\label{lem:norm1}
Consider a homogenous polynomial $g \in \R^d[x]$ of degree $d$ and let $\| \cdot \|$ denote any norm in $\R^n$. We have
\[
g \in \cC \iff g(x) \ge 0 \enspace \text{for all} \enspace  x \in \R_+^n \enspace \text{with} \enspace \|x\|=1.
\]
\end{lem}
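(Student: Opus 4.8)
The plan is to prove the two implications separately, with the forward direction being immediate from the definition of $\cC$ and the reverse direction being the substantive (but still short) part, resting on homogeneity together with the positive homogeneity of any norm.

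For the forward implication ($\Rightarrow$), if $g \in \cC$ then by definition of the cone in \eqref{conecop} we have $g(x) \ge 0$ for every $x \in \R_+^n$; restricting attention to those $x$ that additionally satisfy $\|x\| = 1$ gives the stated conclusion with no further argument.

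For the reverse implication ($\Leftarrow$), I would argue by rescaling to the unit set. First I would dispose of the origin: since $g$ is homogeneous of positive degree $d$, it carries no constant term, so $g(0) = 0 \ge 0$ (equivalently, $g(0) = g(\lambda \cdot 0) = \lambda^d g(0)$ for all $\lambda > 0$ forces $g(0) = 0$). Then, for an arbitrary nonzero $x \in \R_+^n$, I would set $\lambda := \|x\| > 0$ and consider the normalized point $\bar x := x/\lambda$. The two observations that drive the proof are: (i) $\bar x \in \R_+^n$, which is exactly the cone property of $\R_+^n$ ensuring closure under multiplication by the positive scalar $1/\lambda$; and (ii) $\|\bar x\| = 1$, by positive homogeneity of the norm. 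Hypothesis therefore gives $g(\bar x) \ge 0$, and homogeneity yields $g(x) = g(\lambda \bar x) = \lambda^d g(\bar x) \ge 0$ since $\lambda^d > 0$. As $x \in \R_+^n$ was arbitrary, this shows $g(x) \ge 0$ on all of $\R_+^n$, i.e. $g \in \cC$.

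There is no genuine obstacle here: the whole content is the interplay between the scale-invariance of $\R_+^n$ (so that normalization stays inside the cone) and the degree-$d$ homogeneity of $g$ (so the sign of $g$ is preserved under positive scaling). The only point deserving a moment of care is confirming that normalizing by an \emph{arbitrary} norm, rather than specifically the Euclidean one, still lands on the prescribed unit set $\{\|x\| = 1\}$; this follows from $\|\lambda x\| = \lambda \|x\|$ for $\lambda > 0$, which is precisely why the equivalence holds for any choice of norm.
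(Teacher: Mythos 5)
Your proof is correct and follows essentially the same argument as the paper: the forward direction is immediate from the definition of $\cC$, and the reverse direction handles $x=0$ via homogeneity and then rescales any nonzero $x \in \R_+^n$ to the unit set, using $g(x) = \|x\|^d g(x/\|x\|) \ge 0$. Your explicit remarks that normalization stays inside the cone and that any norm is positively homogeneous are points the paper leaves implicit, but the substance is identical.
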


\begin{proof} $[\Rightarrow]$ is obvious. $[\Leftarrow]$: Take $x \in \R_+^n$ with $\|x\| \ne 1$. If $\|x\|=0$ then $x=0$ and $g(0)=0$ because of the homogeneity of $g$. If $\|x\|>0$ then $\widetilde{x}:=\frac{x}{\|x\|}$ fulfills $\|\widetilde{x}\|=1$, therefore $g(x)=g(\|x\| \widetilde{x})=\|x\|^d g(\widetilde{x}) \ge 0$, for all $x \in \R_+^n$ which means $g \in \cC$.
\end{proof}
If we choose the 1-norm, then the set $\Delta^S := \left\{x \in \R_+^n \vert\ \|x\|_{1}=1\right\}$ is the standard simplex. 
Because of Lemma~\ref{lem:norm1}, copositivity of a homogenous polynomial $g$ is then expressed as
\[
g(x) \ge 0 \enspace \text{for all} \enspace x \in \Delta^S.
\]

Our goal is to discretize the simplex $\Delta^S$ and obtain a hierarchy of linear inequalities with respect to the discretization points which allow us to approximate the set $\cC$.

\begin{defn} \label{simplicialpartition}
Let $\Delta$ be a simplex\footnote{An $n$-simplex is an $n$-dimensional polytope which is the convex hull of its $n+1$ vertices $\{x_0, x_1, \ldots, x_n\}$, namely \[ \Delta := \left\{\theta_0 x_0 + \dots \theta_n x_n \bigg \vert\ \sum_{i=0}^{n} \theta_i =1 \enspace \text{and} \enspace \theta_i \ge 0 \enspace \text{for all} \enspace i \in \{0,\ldots,n\}\right\}. \]} in $\R^n$. A family $\cP_m :=\left\{\Delta^1,\dots,\Delta^m\right\}$ of simplices satisfying
\[
\Delta=\bigcup_{i=1}^m \Delta^i \enspace \text{and} \enspace \inn\Delta^i \cap \inn\Delta^j=\emptyset \enspace \text{for} \enspace i \ne j
\]
is called a simplicial partition of $\Delta$.
\end{defn}

\begin{defn} \label{diametersimplex}
For a simplicial partition $\cP_m=\left\{\Delta^1,\dots,\Delta^m\right\}$ of a simplex $\Delta$, where $v_{1}^k,\dots,v_{p}^k$ denote the vertices of simplex $\Delta^k$, the maximum diameter of a simplex in $\cP_m$ is defined as
\[
\delta(\cP_m):=\max_{k \in \left\{1,\dots,m\right\}} \max_{i,j \in \left\{1,\dots,p\right\}} \|v_{i}^k - v_{j}^k\|.
\]
\end{defn}

For a given partition $\cP_m=\left\{\Delta^1,\dots,\Delta^m\right\}$ of $\Delta^S$ and a homogeneous polynomial $g$ defined as in \eqref{tensorpol}, let us consider the set $Q^k$, which contains all the vertices of $\Delta^k$, and moreover, let the set $\cI_{\cP_m}^{p_m,d}$ be defined as
\begin{equation}\label{innerapprox}
\cI_{\cP_m}^{d} =\left\{\begin{aligned} & g \in \R^d[x] \, \Big\vert \,  G[q_1,q_2,\dots,q_d] \ge 0, \\
& \{q_1,q_2,\dots,q_d\} \in Q^k, k = 1,\dots,m \end{aligned} \right\}.
\end{equation}

The following proposition shows that $\left\{\cI_{\cP_l}^{p,d}\right\}_{l \in \Nz}$ is a sequence of inner approximation which approximates the cone of copositive polynomials under the condition that the diameter of the simplicial partition goes to zero.
\begin{prop} \label{prop:InnerApp}
Let $\left\{\cP_{l}\right\}_{l\in \Nz}$ be a sequence of simplicial partitions of $\Delta^S$ such that $\delta(\cP_{l}) \rightarrow 0$. Then, we have
\[
\inn \cC \subseteq \bigcup_{l \in \Nz} \cI_{\cP_{l}}^{d} \subseteq \cC, \enspace \text{and hence} \enspace \cC=\overline{\bigcup_{l \in \Nz} \cI_{P_{l}}^{d}}.
\]
\end{prop}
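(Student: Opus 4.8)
The plan is to prove the two inclusions separately and then deduce the closure identity from standard convex analysis. I begin with the easy inclusion $\bigcup_{l\in\Nz}\cI_{\cP_l}^{d}\subseteq\cC$. Suppose $g\in\cI_{\cP_l}^{d}$ for some $l$, with associated symmetric tensor $G$ as in \eqref{tensorpol}. Fix any $x\in\Delta^S$; since $\cP_l$ covers $\Delta^S$, we have $x\in\Delta^k$ for some $k$, so $x=\sum_i\theta_i v_i^k$ with barycentric coordinates $\theta_i\ge 0$. Expanding by multilinearity,
\[
g(x)=G[x,\dots,x]=\sum_{i_1,\dots,i_d}\theta_{i_1}\cdots\theta_{i_d}\,G[v_{i_1}^k,\dots,v_{i_d}^k],
\]
where every product $\theta_{i_1}\cdots\theta_{i_d}\ge 0$ and every tensor evaluation $G[v_{i_1}^k,\dots,v_{i_d}^k]\ge 0$ by membership in $\cI_{\cP_l}^{d}$. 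Hence $g(x)\ge 0$ on all of $\Delta^S$, and Lemma~\ref{lem:norm1} gives $g\in\cC$.

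For the harder inclusion $\inn\cC\subseteq\bigcup_{l}\cI_{\cP_l}^{d}$, I would first show that a polynomial in the interior of $\cC$ is \emph{strictly} positive on the simplex. If $g\in\inn\cC$ vanished at some $x_0\in\Delta^S$, then taking $p(x):=(\sum_i x_i)^d\in\cC$, which satisfies $p\equiv 1$ on $\Delta^S$, the perturbation $g-\eps p$ would obey $(g-\eps p)(x_0)=-\eps<0$ for every $\eps>0$, so $g-\eps p\notin\cC$; this contradicts $g\in\inn\cC$. Thus $g>0$ on $\Delta^S$, and by compactness $\mu:=\min_{x\in\Delta^S}g(x)>0$.

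It remains to show $g\in\cI_{\cP_l}^{d}$ once $\delta(\cP_l)$ is small. For vertices $q_1,\dots,q_d$ of a common subsimplex $\Delta^k$ and the reference vertex $q:=q_1$, I would telescope
\[
G[q_1,\dots,q_d]-g(q)=\sum_{j=1}^d G[q_1,\dots,q_{j-1},\,q_j-q,\,q,\dots,q],
\]
using linearity of $G$ in the $j$-th slot. Bounding each term by the operator norm of the fixed multilinear form $G$ together with $\|q_j-q\|\le\delta(\cP_l)$ and the boundedness of $\Delta^S$, one obtains $|G[q_1,\dots,q_d]-g(q)|\le C_G\,\delta(\cP_l)$ for a constant $C_G$ depending only on $G$ and $\Delta^S$. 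Since $g(q)\ge\mu$, choosing $l$ with $\delta(\cP_l)<\mu/C_G$ forces $G[q_1,\dots,q_d]>0$ for every admissible vertex tuple, i.e. $g\in\cI_{\cP_l}^{d}$, establishing $g\in\bigcup_{l}\cI_{\cP_l}^{d}$.

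Finally, for the closure identity I would note that $\cC$ is closed, being the intersection over $x\in\R_+^n$ of the closed half-spaces $\{g:g(x)\ge 0\}$ in the finite-dimensional space $\R^d[x]$, and that it is a convex cone with nonempty interior (the perturbation argument shows any polynomial strictly positive on $\Delta^S$, such as $(\sum_i x_i)^d$, lies in $\inn\cC$). Taking closures in the chain $\inn\cC\subseteq\bigcup_{l}\cI_{\cP_l}^{d}\subseteq\cC$ and invoking $\overline{\inn\cC}=\cC$ for a closed convex set with nonempty interior yields $\cC=\overline{\bigcup_{l}\cI_{\cP_l}^{d}}$. I expect the fineness estimate of the third paragraph to be the main obstacle: it is precisely the step that converts strict positivity of $g$ at the vertices into nonnegativity of the \emph{mixed} tensor evaluations defining $\cI_{\cP_l}^{d}$, and it hinges on the characterization of $\inn\cC$ as strictly positive polynomials together with uniform (Lipschitz-type) control of the symmetric multilinear form across all vertex tuples as $\delta(\cP_l)\to 0$.
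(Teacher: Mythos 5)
Your proof is correct, and its skeleton coincides with the paper's: you prove the easy inclusion $\bigcup_{l}\cI_{\cP_l}^{d}\subseteq\cC$ by exactly the argument of the paper's Lemma~\ref{simpconv} (barycentric coordinates plus multilinearity of $G$), and you obtain the closure identity, as the paper does, by sandwiching between $\inn\cC$ and $\cC$ and invoking $\cC=\overline{\inn\cC}$. The one point of genuine divergence is the fineness step, which the paper isolates as Lemma~\ref{strictcop}: there, the argument is soft --- the tensor form $G$ is strictly positive on the diagonal of $\Delta^S\times\dots\times\Delta^S$ and uniformly continuous on this compact set, so a single $\epsilon>0$ works for all vertex tuples --- whereas you telescope $G[q_1,\dots,q_d]-g(q_1)$ slot by slot and bound each increment by the operator norm of $G$, arriving at the explicit threshold $\delta(\cP_l)<\mu/C_G$. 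Your version is quantitative (it tells you how fine the partition must be in terms of $\min_{\Delta^S}g$ and a norm of $G$, which could feed into a complexity estimate for Algorithm~\ref{alg:Disc}), while the paper's compactness argument is shorter but gives no constructive bound; both rest on the same insight that strict positivity on the diagonal propagates to nearby mixed evaluations. You also supply two details the paper asserts without proof: that $g\in\inn\cC$ forces strict positivity on $\Delta^S$ (via the perturbation $g-\eps(\sum_i x_i)^d$), and that the identity $\cC=\overline{\inn\cC}$ needs $\inn\cC\neq\emptyset$. One small slip there: your parenthetical says the perturbation argument shows that strictly positive polynomials lie in $\inn\cC$, but that argument gives the opposite implication; the correct (equally easy) justification is that $g\mapsto\min_{x\in\Delta^S}g(x)$ is continuous on the finite-dimensional coefficient space, so strict positivity on $\Delta^S$ is an open condition, which by Lemma~\ref{lem:norm1} and homogeneity places such $g$ in $\inn\cC$.
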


Proposition~\ref{prop:InnerApp} ensures that if we construct a hierarchy of linear programs by making the partition finer, we can find a rational polynomial Lyapunov function for homogenous systems if the origin is exponentially stable. A pseudocode for implementing this method is given in the form of Algorithm~\ref{alg:Disc} in Appendix~\ref{app:algoDisc}.

To prove Proposition~\ref{prop:InnerApp}, we need the following two lemmas. The first one gives us sufficient conditions for copositivity and the second one a necessary condition for strict copositivity. 

\begin{lem} \label{simpconv}
Consider the set of vectors, $V_{\cP} := \{ v_1, \cdots, v_p\}$, and let $\Delta=\mathrm{conv}\{v_{1},\dots,v_{p}\}$. If
\begin{equation} \label{assum}
G[v_{i_1},v_{i_2},\dots,v_{i_d}] \ge 0 \enspace \text{for all} \enspace i_1,i_2,\dots,i_d \in \{1,\dots,p\},
\end{equation}
then $g(x)= G[x,x,\dots,x] \ge 0$ for all $x \in \Delta$.
\end{lem}

\begin{proof}
For each point $x \in \Delta$, we can represent it in the affine hull of $\Delta$ by its uniquely determined barycentric coordinates $\lambda=(\lambda_{1},\dots,\lambda_{p})$ with respect to $\Delta$ i.e.
\[
x=\sum_{j=1}^{p} \lambda_{j}v_{j} \enspace \text{with} \enspace \sum_{j=1}^{p} \lambda_{j}=1.
\]
This gives
\begin{align*}
g(x)
&=G[x,x,\dots,x]\\
&=G\big[\sum_{i_1=1}^{p} \lambda_{i_1}v_{i_1},\sum_{i_2=1}^{p} \lambda_{i_2}v_{i_2},\dots,\sum_{i_d=1}^{p} \lambda_{i_d}v_{i_d}\big]\\
&=\sum_{i_1,i_2,\dots,i_d=1}^{p}G[v_{i_1},v_{i_2},\dots,v_{i_d}]\lambda_{i_1}\lambda_{i_2} \dots \lambda_{i_d}.
\end{align*}
For $x \in \Delta$, we have $\lambda_i \ge 0$, and by the assumption \eqref{assum}, we get $g(x) \ge 0$ for all $x \in \Delta$.
\end{proof}

\begin{lem} \label{strictcop}
Let $g \in \R^d[x]$ be strictly copositive and homogeneous. Then there exists $\epsilon >0$ such that for any finite simplicial partition $\cP_m=\{\Delta^1,\dots,\Delta^m\}$ of $\Delta^S$ with $\delta(\cP_m) \le \epsilon$, we have $\forall  k=1,\dots,m,$ and $i_1,i_2,\dots,i_d \in \{1,\dots, \vert Q^k \vert\}$,
\[
G[v_{i_1}^k,v_{i_2}^k,\dots,v_{i_d}^k] >0,
\]
where $v_{1}^k,v_2^k, \cdots \in Q^k$, the set containing the vertices of the simplex $\Delta^k$.
\end{lem}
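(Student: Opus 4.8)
The plan is to combine the compactness of the standard simplex with the multilinearity of the tensor $G$. By homogeneity (exactly as in Lemma~\ref{lem:norm1}, but with strict inequalities), strict copositivity of $g$ is equivalent to $g(x) > 0$ for every $x \in \Delta^S$. Since $g$ is a polynomial, hence continuous, and $\Delta^S$ is compact, the minimum $m_0 := \min_{x \in \Delta^S} g(x)$ is attained and satisfies $m_0 > 0$. Moreover, every vertex $v_i^k$ of every sub-simplex $\Delta^k$ lies in $\Delta^S$, because $\Delta^k \subseteq \Delta^S$ by Definition~\ref{simplicialpartition}; in particular all these vertices lie in a fixed bounded set.

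The key step is a uniform Lipschitz-type estimate for $G$. Because $G$ is a single fixed multilinear form on $\R^n$, there is a constant $M > 0$ with $\vert G[u_1,\dots,u_d]\vert \le M \|u_1\| \cdots \|u_d\|$ for all $u_1,\dots,u_d \in \R^n$, and a radius $R > 0$ with $\|x\| \le R$ for all $x \in \Delta^S$. Fixing a sub-simplex $\Delta^k$ and indices $i_1,\dots,i_d$, I would compare $G[v_{i_1}^k,\dots,v_{i_d}^k]$ with the diagonal value $G[v_{i_1}^k,\dots,v_{i_1}^k] = g(v_{i_1}^k)$ by replacing one argument at a time. Writing $w := v_{i_1}^k$ and telescoping, one gets $G[v_{i_1}^k,\dots,v_{i_d}^k] - g(w) = \sum_{j=1}^{d} G[v_{i_1}^k,\dots,v_{i_{j-1}}^k,\, v_{i_j}^k - w,\, w,\dots,w]$, and bounding each summand through multilinearity yields $\vert G[v_{i_1}^k,\dots,v_{i_d}^k] - g(w)\vert \le M R^{d-1} \sum_{j=1}^d \|v_{i_j}^k - w\| \le d\, M R^{d-1}\, \delta(\cP_m)$, since all vertices of $\Delta^k$ lie within distance $\delta(\cP_m)$ of one another.

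Setting $C := d\, M R^{d-1}$, the estimate gives $G[v_{i_1}^k,\dots,v_{i_d}^k] \ge g(w) - C\,\delta(\cP_m) \ge m_0 - C\,\delta(\cP_m)$ for every $k$ and every choice of indices. It then suffices to take $\epsilon := m_0/(2C)$: for any partition with $\delta(\cP_m) \le \epsilon$ we obtain $G[v_{i_1}^k,\dots,v_{i_d}^k] \ge m_0/2 > 0$, which is the claim. The constant $C$ depends only on $g$ (through $M$ and $d$) and on the fixed simplex $\Delta^S$ (through $R$), so the same $\epsilon$ works simultaneously for all sub-simplices of all admissible partitions — this uniformity is exactly what the statement requires.

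The only genuine obstacle is the bookkeeping in the telescoping estimate, namely checking that the constant $C$ can be taken independent of the partition and of which vertices are selected. This is guaranteed by the boundedness of $\Delta^S$ and by the fact that $G$ is one fixed tensor rather than a family varying with the partition; everything else is a routine consequence of compactness and continuity.
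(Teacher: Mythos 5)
Your proof is correct, and while it rests on the same underlying fact as the paper's proof --- strict positivity of the tensor form on the diagonal of $\Delta^S \times \dots \times \Delta^S$, propagated to a uniform neighborhood of the diagonal where all vertex tuples of a fine partition live --- the execution is genuinely different. The paper argues softly: it picks, for every point $x^i \in \Delta^S$, a radius $\epsilon_{x^i} > 0$ of near-diagonal positivity, and then appeals to uniform continuity of $G$ on the compact product set to assert that $\epsilon := \inf_{x^i} \epsilon_{x^i}$ is strictly positive; that last step is actually the delicate one (an infimum of pointwise radii is not automatically positive --- one needs a Lebesgue-number-type argument or lower semicontinuity of the optimal radius), and the paper leaves it implicit. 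You instead make the whole argument quantitative: the positive minimum $m_0$ of $g$ on the compact simplex, the norm bound $\vert G[u_1,\dots,u_d]\vert \le M \Vert u_1\Vert \cdots \Vert u_d\Vert$ for the one fixed multilinear form, and the telescoping identity obtained by swapping one argument at a time give the Lipschitz-type bound $\vert G[v_{i_1}^k,\dots,v_{i_d}^k] - g(v_{i_1}^k)\vert \le d\, M R^{d-1} \delta(\cP_m)$, hence the explicit threshold $\epsilon = m_0/(2C)$ with $C = d\, M R^{d-1}$. What your route buys: explicit constants (so one can bound a priori how fine the partition in Algorithm~\ref{alg:Disc} must be), and it closes the soft step the paper glosses over; what the paper's route buys is brevity. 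Both arguments correctly deliver the uniformity of $\epsilon$ over all admissible partitions and all choices of vertices, which is the crux of the statement.
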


\begin{proof}
We have by assumption that $g$ is strictly copositive which means that the tensor form $G[x^1,x^2,\dots,x^d]$ is strictly positive on the diagonal of $\Delta^S\times\Delta^S\times \dots \times\Delta^S \subset \R^{nd}$. By continuity, for every $x^i \in \Delta^S$, there exists $\epsilon_{x^{i}} > 0$ such that, for $j = 1,\dots, d$,
\[
\|x^i-x^j\| \le \epsilon_{x_{i}} \Rightarrow G[x^1,x^2,\dots,x^d] > 0.
\]
Since $G$ is uniformly continuous on the compact set $\Delta^S\times \dots \times\Delta^S$, it follows that $\epsilon:=\inf_{x^i \in \Delta^S} \epsilon_{x^{i}}$ is strictly positive.

Let $\cP_m=\{\Delta^1,\dots,\Delta^m\}$ be a simplicial partition of $\Delta^S$ with $\delta(\cP_m) \le \epsilon$. Let $\Delta^k$ with $k=1,\dots,m$ be an arbitrary simplex, and $v_{i}^k$, $i=1,\dots, \vert Q^k\vert$ arbitrary vertices of $\Delta^k$. Then, for $i,j=1,\dots,\vert Q^k\vert$, we have $\|v_{i}^k-v_{j}^k\| < \epsilon$, and therefore $G[v_{i_1}^k,v_{i_2}^k,\dots,v_{i_d}^k] > 0$ for all $i_1,i_2,\dots,i_d \in \{1,\dots, \vert Q^k\vert\}$, so the statement is proved.
\end{proof}

\begin{proof}[Proof of Proposition~\ref{prop:InnerApp}]
Take $g \in \inn \cC$, which means that $g$ is strictly copositive. Lemma~\ref{strictcop} implies that there exists $l_{0} \in \Nz$, such that $g \in \cI_{\cP_{l_0}}^{p,d}$. Then $g \in \bigcup_{l \in \Nz} \cI_{\cP_{l}}^{d}$, and $\inn \cC \subseteq \bigcup_{l \in \Nz} \cI_{\cP_{l}}^{d}$. 

Next, for proving $\bigcup_{l \in \Nz} \cI_{\cP_{l}}^{d} \subseteq \cC$, we have to show that $\cI_{\cP_{l}}^{d} \subseteq \cC$ for some $l \in \Nz$. So take $g \in \cI_{\cP_{l}}^{d}$ for some $l \in \Nz$. To prove $g \in \cC$, it is sufficient to prove nonnegativity of $g(x)$ for $x \in \Delta^S$. Let us choose an arbitrary $x \in \Delta^S$, then $x \in \Delta^k$ for some $\Delta^k \in \cP_{l}$. By direct use of Lemma~\ref{simpconv}, we get $g(x)=G[x,x,\dots,x] \ge 0$ for all $x \in \Delta^S$. 

Lastly, since $\cC=\overline{\inn \cC}$, we get $\cC=\overline{\bigcup_{l \in \Nz} \cI_{P_{l}}^{d}}$.
\end{proof}



\subsection{Sum-of-Squares (SOS) Method}\label{sec:sosMethod}
A commonly employed tool for checking the positivity of a polynomial  is to write it in the form of a sum of squares of other polynomials. While testing positivity is a computationally hard problem, the question of finding an SOS decomposition of a polynomial is actually a semidefinite program \cite{Powers98}.
The crux of such ideas can be found in \cite{Par00} and its application to copositivity is sketched in \cite{Parr00}. 


The basic idea is to get rid of the constraint $x \in \R_+^n$. We let $x_i=y_i^2 $, $i \in \left\{1,\cdots, n \right\}$ be the change of variable where $y^2$ is the short-hand for $(y_1^2, \dots, y_n^2)$. 
Clearly we have 
\[
h(x) \ge 0, \enspace \forall x \in \R_+^n \iff h(y^2) \ge 0, \enspace \forall y \in \R^n.
\]
Then, the inequalities \eqref{Algoineq} are rewritten as follows
\begin{subequations} \label{ParAlgoineq}
\begin{gather}
 P_{h}(y):=h(y^2) \ge 0, \enspace \forall y \in \R^n \\
 P_{s_{0}}(y):=s_{0}(y^2) \ge 0, \enspace y_i \ne 0, \forall i \\
 P_{s_{i}}(y):=s_{i}(y^2) \ge 0, \enspace y_i = 0, \enspace i \in \left\{1,...,n\right\}
\end{gather}
\end{subequations}
where $h, s_o, s_i$ are homogeneous polynomials.

Next, we define the polynomials
\begin{subequations} \label{polyd}
\begin{gather}
 P_{h}^{(d)}(y):=\|y\|^{2d}P_{h}(y)\\
 P_{s_{0}}^{(d)}(y):=\|y\|^{2d}P_{s_{0}}(y)\\
 P_{s_{i}}^{(d)}(y):=\|y\|^{2d}P_{s_{i}}(y)
\end{gather}
\end{subequations}
where $d$ is an integer. It is obvious that inequalities \eqref{ParAlgoineq} are satisfied if and only if
\begin{subequations} \label{dposit}
\begin{gather}
 P_{h}^{(d)}(y) \ge 0, \enspace \forall y \in \R^n\\
 P_{s_{0}}^{(d)}(y) \ge 0, \enspace y_i \ne 0, \forall i\\
 P_{s_{i}}^{(d)}(y) \ge 0, \enspace y_i = 0, \enspace i \in \left\{1,...,n\right\}.
 \end{gather}
\end{subequations}

\begin{prop}\label{prop:sospoly}
For the homogeneous copositive functions $h$, $s_0$ and $s_i$, $i \in \left\{1,\dots,n\right\}$, there exists $d \in \N$ sufficiently large such that the polynomials $P_{h}^{(d)}$, $P_{s_{0}}^{(d)}$ and $P_{s_{i}}^{(d)}$ are SOS. 
\end{prop}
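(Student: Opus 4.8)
The plan is to combine two classical ingredients: \emph{Parrilo's square substitution} \cite{Par00, Parr00}, which is already the basis of \eqref{ParAlgoineq}, and \emph{Reznick's uniform-denominator theorem} (see e.g.\ \cite{Lass15}), which asserts that for a \emph{positive definite} form $p$ (homogeneous, with $p(y) > 0$ for all $y \ne 0$) there is an integer $d$ such that $\|y\|^{2d}\,p(y)$ is a sum of squares. The multiplier $\|y\|^{2d} = (\sum_{i=1}^n y_i^2)^d$ is precisely the one introduced in \eqref{polyd}, so the task reduces to checking that each of $P_h$, $P_{s_0}$, $P_{s_i}$ (or its appropriate restriction) is a positive definite form and then invoking Reznick. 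Note also that if $\|y\|^{2d}p$ is SOS then so is $\|y\|^{2d'}p$ for every $d' \ge d$, since $\|y\|^{2(d'-d)}$ is itself SOS and a product of SOS polynomials is SOS; this lets one take a single $d$ at the end by maximizing over the finitely many forms.

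First I would treat $P_h$. Since $y^2 = (y_1^2,\dots,y_n^2) \in \R_+^n$ for every $y \in \R^n$, copositivity of $h$ gives $P_h(y) = h(y^2) \ge 0$ on all of $\R^n$; and because a genuine Lyapunov function is \emph{strictly} copositive, i.e.\ $h(x) > 0$ for $x \in \R_+^n \setminus \{0\}$ (the coordinate axes included), one has $P_h(y) = 0 \Leftrightarrow y^2 = 0 \Leftrightarrow y = 0$. Hence $P_h$ is a positive definite form and Reznick supplies $d_h$ with $P_h^{(d_h)} = \|y\|^{2d_h}P_h$ SOS.

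The derivative forms $s_0$ and $s_i$ are required to be nonnegative only on the relatively open pieces of the cone — the interior for $s_0$, the relative interior of the face $S_i$ for $s_i$ — where the strict Lyapunov inequalities \eqref{gradLyap} make them strictly positive. For the face terms I would set $y_i = 0$ and view $P_{s_i}|_{y_i = 0}$ as a positive definite form in the remaining $n-1$ variables $y' = (y_1,\dots,\widehat{y_i},\dots,y_n)$; Reznick in $n-1$ variables then yields $d_i$ with $\|y'\|^{2d_i}\,P_{s_i}|_{y_i=0}$ SOS. Lifting back, the difference between $P_{s_i}^{(d_i)}(y)$ and this SOS in $y'$ vanishes on $\{y_i = 0\}$, hence is divisible by $y_i$, giving $P_{s_i}^{(d_i)} = \Sigma_i + y_i\,g_i$ with $\Sigma_i$ SOS — an SOS certificate modulo the ideal $\langle y_i\rangle$, which is exactly what the constraint $y_i = 0$ in \eqref{dposit} demands. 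The interior form $P_{s_0}$, strictly positive wherever $\prod_i y_i \ne 0$, is handled in the same spirit as $P_h$, once its strict positivity up to the coordinate hyperplanes is used to conclude positive-definiteness. Taking $d = \max\{d_h, d_{s_0}, d_1,\dots,d_n\}$ then finishes the proof by the monotonicity remark above.

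The main obstacle is the \textbf{strict-positivity hypothesis} of Reznick's theorem. The uniform-denominator result genuinely fails for merely nonnegative forms — the Motzkin form is nonnegative yet $\|y\|^{2d}$ times it is SOS for no $d$ — so the substance of the argument is not the algebra but verifying that the square substitution produces forms that are positive definite on the relevant coordinate subspace. This holds because the functions $h$, $s_0$, $s_i$ coming from Proposition~\ref{prop:homogLyapfct} and the strict inequalities \eqref{gradLyap} are strictly copositive on the relatively open cones in question; the more delicate bookkeeping is the passage, for $s_0$ and $s_i$, from a certificate valid only on an open face or hyperplane to one for $P_\bullet^{(d)}$ itself, which the continuity/divisibility arguments above are designed to supply.
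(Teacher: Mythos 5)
Your route is genuinely different from the paper's, and the difference is where the trouble lies. The paper never invokes Reznick's uniform-denominator theorem: it substitutes $z_i=y_i^2$, observes that copositivity of $h$ is nonnegativity of $h(z)$ on the standard simplex, applies P\'olya's theorem to get that $(\sum_{i=1}^n z_i)^d\,h(z)$ has nonnegative coefficients for $d$ large, and then notes that after resubstituting $z_i=y_i^2$ every monomial is even, so nonnegative coefficients already exhibit $P_h^{(d)}$ as a sum of squares of \emph{monomials}. That mechanism never requires positive definiteness of $P_h$ --- which is exactly the hypothesis your argument cannot do without. For $P_h$ your argument can be salvaged (though only by importing strict copositivity from the Lyapunov construction, which is not in the proposition's hypothesis of mere copositivity), but it breaks for $P_{s_0}$: strict positivity of a form off the coordinate hyperplanes together with global nonnegativity does \emph{not} imply positive definiteness (consider $y_1^2y_2^2$), and the Lyapunov inequalities \eqref{gradLyap} give strictness of $s_0$ only on $\inn(K)$, so $P_{s_0}$ may genuinely vanish at nonzero points with some $y_i=0$. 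Reznick's theorem is then simply inapplicable to $P_{s_0}$, and no continuity argument repairs this.

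The second gap concerns the face polynomials. Proposition~\ref{prop:sospoly} asserts that $P_{s_i}^{(d)}$ \emph{itself} is SOS, and this is precisely the constraint imposed in Algorithm~\ref{alg:SOS}, so the proposition is the feasibility guarantee for that SOS program. What you prove instead is $P_{s_i}^{(d)}=\Sigma_i+y_i g_i$ with $\Sigma_i$ SOS, i.e.\ an SOS certificate modulo the ideal $\langle y_i\rangle$. That certifies nonnegativity on $\{y_i=0\}$, which is what \eqref{dposit} asks, but it is not the stated conclusion and does not establish feasibility of the algorithm as written; the paper instead takes the hypothesis at face value ($s_i$ copositive on all of $\R_+^n$) and runs the same P\'olya argument verbatim. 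Two smaller points: your Motzkin claim is factually wrong --- $(y_1^2+y_2^2+y_3^2)$ times the Motzkin form is the classical example of a non-SOS psd form that becomes SOS after one multiplication; the genuine failures of uniform denominators (psd forms with zeros admitting no such multiplier) are due to Reznick's 2005 paper. And, in fairness, your instinct that strictness is the crux is sound: the paper's own statement of P\'olya's theorem with ``$f\ge 0$'' is too generous (the correct theorem needs $f>0$ on the simplex; $(x_1-x_2)^2$ is a counterexample otherwise), so the paper's proof carries a parallel strictness issue --- but its mechanism is structurally different from yours, and the specific steps you propose for $s_0$ and $s_i$ do not go through.
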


\begin{proof}
We will carry out the proof only for $h$ and it will be similar for the other polynomials.
Let 
\begin{subequations}\label{2sets} \nonumber
\begin{gather}
K_{n}^d:=\left\{h \in \R[x] \vert\ P_{h}^{(d)} \enspace \text{SOS} \right\} \\
C_{n}^d:=\left\{h \in \R[x] \vert\ P_{h}^{(d)} \enspace \text{has positive coefficients} \right\}.
\end{gather}
\end{subequations}
We notice that $C_{n}^d \subseteq K_{n}^d$ because if $P_{h}^{(d)}(y)$ has only positive coefficients then the polynomial $P_{h}(y)=h(y^2)$ is SOS and since $P_{h}(y)$ is multiplied by $\|y\|^{2d}$, it follows that $P_{h}^{(d)}(y)$ is SOS. So we just need to prove that $P_{h}^{(d)}(y)$ has positive coefficients.

The copositivity of $h$ is equivalent to the positivity of $P_{h}$. And since $h$ is homogeneous, this will be also equivalent to the positivity of $P_{h}$ on the unit ball which means $P_{h}(y) \ge 0$, $\forall y \in \R^n$, $\sum_{i=1}^n y_{i}^2=1$. By substituting $y_{i}^2$ by $z_{i}$, we obtain $P_{h}(z) \ge 0$ , $\forall z \ge 0$, $\sum_{i=1}^n z_{i}=1$.

Let us now recall P\'olya's Theorem, see \cite{Lau09} and \cite{PowRez01} for the proof.

\begin{thm} {\bf (P\'olya's Theorem)} Let $f \in \R[x]$ be homogeneous. If $f \ge 0$ on the simplex $\left\{x \ge 0 \vert\ \sum_{i=1}^n x_{i}=1 \right\}$, then there exists a sufficiently large $l \in \Nz$ for which the polynomial $(\sum_{i=1}^n x_{i})^l f(x)$ has all its coefficients nonnegative.
\end{thm}

Applying this Theorem to the homogeneous polynomial $P_{h}(z)$, we obtain that for sufficiently large $d \in \Nz$, all the coefficients of the polynomial $P_{h}^{(d)}(z)=(\sum_{i=1}^n z_{i})^d P_{h}(z)$ are positive. Then $P_{h}^{(d)}$ is SOS in view of the fact that $C_{n}^d \subseteq K_{n}^d$.
\end{proof}

To sum up this section, the foregoing result allows us to write an algorithm to compute the polynomials $P_{h}^{(d)}$, $P_{s_{0}}^{(d)}$ and $P_{s_{i}}^{(d)}$ in the form of SOS,
the result is then used to get a homogeneous copositive Lyapunov function,
see Algorithm~\ref{alg:SOS} in Appendix~\ref{app:algoSOS}.



\section{Examples and Simulations}\label{sec:simul}
In this section, we compute copositive polynomial Lyapunov functions for complementarity systems by implementing our two methods (discretization and SOS).  
In our examples, the YALMIP toolbox in Matlab is used to input the LP and SOS optimization problems and solve them with the conic solver MOSEK.



\begin{exam}[Quadratic Lyapunov function by the discretization method]\label{ex:discret}
Consider system \eqref{eq:compSys} with $f(x)=Ax$ and $A = \begin{bsmallmatrix} -1 & -2\\ -1 &-1\end{bsmallmatrix}$ and $K=\R_+^2$. We apply the discretization method on the standard simplex $\Delta^S := \left\{x \in \R_+^2 \vert\ \|x\|_{1}=1\right\}$ of Algorithm~\ref{alg:Disc}. Starting with a degree 2 polynomial, we solve for its coefficients at the vertices of $\Delta^S$. This procedure in Algorithm~\ref{alg:Disc} is applied by partitioning all the simplices at each step by a factor of half and solving certain inequalities at the vertices of resulting simplices. For this example, we found 
$
V(x)=x_{1}^2+x_{1}x_{2}+x_{2}^2
$
in four iterations, that is, Algorithm~\ref{alg:Disc} terminates with $\delta = 1/16$.
\end{exam}

\begin{exam}[Cubic Lyapunov function by the discretization method]\label{ex:discret1}
Consider system \eqref{eq:compSys} with $K=\R_+^2$ and
\begin{equation}
f(x) = \left[\begin{array}{c}
-x_1^2 -2x_2^2 +x_1x_2\\
-x_1^2 -x_2^2 +2x_1x_2\\
\end{array}\right].
\end{equation}
Applying the discretization method of Algorithm~\ref{alg:Disc}, after 4 iterations with $\delta=\frac{1}{16}$, we obtain 
\begin{equation}
V(x)=x_{1}^3 + \frac{3}{2} x_1x_2^2 + \frac{3}{2} x_2x_1^2 + \frac{1}{2} x_2^3 .
\end{equation}
\end{exam}

\begin{exam}[Quadratic Lyapunov function by SOS method]
Consider system \eqref{eq:compSys} with $f(x)=Ax$ and $A = \begin{bsmallmatrix} -1 & 10\\ 0 &-2\end{bsmallmatrix}$ and $K=\R_+^2$.
Following Algorithm~\ref{alg:SOS}, we express positivity condition on desired polynomials by requiring them to be SOS, and use the YALMIP command {\tt solvesos} which calls a semidefinite solver to yield the desired coefficients. We obtain
\begin{equation}
V(x)=0.1x_{1}^2+0.1916x_{1}x_{2}+1.1137x_{2}^2.
\end{equation}
\end{exam}

\begin{exam}[A copositive quadratic Lyapunov function that is not positive definite] 
\label{ex:3d}
\begin{figure}[!t] 
\centering
\includegraphics[height=5cm]{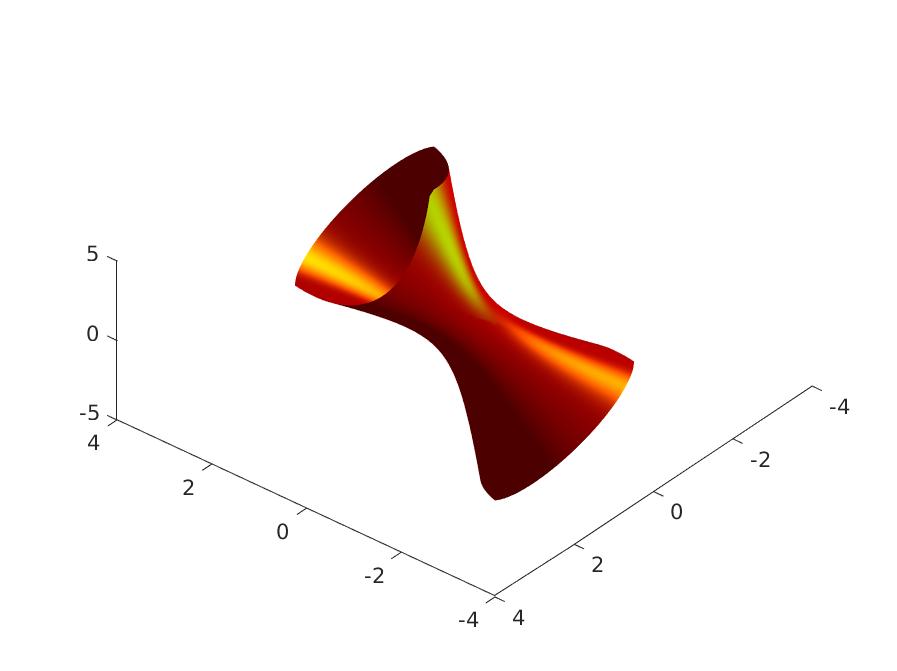}
\caption{Non-convex unit level set of the quadratic Lyapunov function $V$ in Example~\ref{ex:3d}.}
\end{figure}

Consider system \eqref{eq:compSys} with $f(x)=Ax$ and $A = \begin{bsmallmatrix} -1 & -3 & -2\\ -5 & 1 & -1 \\ 3 & -10 & -2\end{bsmallmatrix}$ and $K=\R_+^3$.
Applying the SOS method of Algorithm~\ref{alg:SOS} we obtain
\begin{equation*}
V(x)= 2.3234x_{1}^2+3.6729x_{1}x_{2}+1.7352x_{2}^2+1.1273x_{1}x_{3} +2.6769x_{2}x_{3}+1.2820x_{3}^2.
\end{equation*}
This polynomial is not positive definite since one of the eigenvalues of the corresponding matrix is negative. The unit level set of this polynomial Lyapunov function is shown in Figure 2.

\end{exam}

These examples just provide an illustration of two classes of algorithms primarily used for checking positivity or copositivity of polynomials, and how they can be used for computing Lyapunov functions with constrained dynamics. The survey article \cite{BomDurTeo12} provides an overview of these methods, along with some other techniques, which appear in general in the literature on checking copositivity. Further questions such as using other algorithms or comparing computational complexity of different methods require further investigation.

\section{Conclusions}\label{sec:conc}

This article addressed the stability analysis for a class of complementarity systems using the method of Lyapunov functions. Questions pertaining to the existence of cone-copositive Lyapunov functions were answered in the affirmative for exponentially stable systems. Some refinements of this result, under certain conditions on the vector field in the system dynamics, allow us to restrict our search for cone-copositive Lyapunov functions within the class of homogeneous and rational polynomials. These statements indeed bring tractability to the numerical methods that have been proposed in this paper for computing Lyapunov functions. In particular, two hierarchies of convex optimization problems are obtained using the methods based on discretization and SOS approximation, respectively.

Several immediate questions of interest emerge from our work which require further investigation. The first one among those is to extend our results to broader classes of complementarity systems. Systems of the form~\eqref{eq:compSys} are one particular class of relative degree one systems, but in applications, one sees more complex complementarity systems of the form studied in \cite{TanwBrog18}. In such a wider class of systems, one sees different kinds of constraints on the state trajectories. Moreover, the constraints may vary with time in which case one has to consider the possibility of time-varying Lyapunov functions. It would be interesting to consider converse questions for this broader class of systems.

Some extensions at the level of designing algorithms are also of potential interest. At this moment our algorithms are specifically adapted to the constraint sets of the form positive orthant or an invertible linear transformation of such sets. Adapting this technique to more generic sets remains to be seen. Also, in our current treatment, we have considered discretization algorithms in $\R^2$ and $\R^3$, where it is relatively straightforward to write algorithms for partition of simplices. It remains to be seen how the algorithms for simplical partition in higher dimensions perform in computing Lyapunov functions. 



\section*{Acknowledgements}
The authors would like to thank Bachir El Khadir for pointing out the reference \cite{AhmKha19} during his visit to Toulouse. The second author also acknowledges several discussions with his PhD student Matteo Della Rossa related to the existing results on converse Lyapunov theorems.

\appendices
\section{Results from Complementarity Theory}\label{app:basicsComp}
The reference book for this topic is \cite{cottle1992}, see also \cite{FaccPang03}.
\begin{defn}[Complementarity Problem]\label{def:compProb}
Let $F:\R^{n} \rightarrow \R^{n}$ and $\eta \in \R^{n}$. The problem $\eta \geq 0$, $F(\eta)\geq 0$, $\eta^{\top}F(\eta)=0$ is a complementarity problem (CP) with unknown $\eta$, written compactly as $0 \leq \eta \perp F(\eta) \geq 0$. When $F(\eta)=M\eta+q$ for a matrix $M$ and a vector $q$, this is a linear complementarity problem denoted $\lcp(q,M)$. Similarly, for a given closed, convex cone $K \subset \R^n$, the problem of finding $\eta \in K^\star$ such that $K^\star \ni \eta \perp F(\eta) \in K$ is termed as the cone-complementarity problem, and for $F(\eta)=M\eta+q$, it is a linear cone complementarity problem, denoted $\lccp(q, M, K)$.

Based on discussions in \cite[Chapter 2]{FaccPang03}, complementarity problems can be reformulated as nonlinear equations, or optimization problems. For a closed convex cone $K$, one can also write the solution of $\lccp(q,M,K)$ as the solution to the following optimization problem:
\begin{gather} \label{optimprob}
\begin{aligned}
\min_{\eta \in K^\star} & \ \eta^\top (q + M\eta)\\
\text{s.t.} & \ M\eta + q \in K.
\end{aligned}
\end{gather}
\end{defn}

\begin{defn}\label{definitionB2}
A matrix $M \in \R^{n \times n}$ is a P-matrix if all its principal subdeterminants (or principal minors) are positive. It is a P$_{0}$-matrix if its principal minors are non negative.
\end{defn}

\noindent
We have $M \succ 0 \Rightarrow M$ is a P-matrix, $M \succcurlyeq 0 \Rightarrow M$ is a P$_{0}$-matrix and a copositive matrix on $\R^{n}_{+}$. One usually considers copositivity over convex sets \cite{hiriart2010}. Yet even in this case copositivity is hard to characterize. Many more matrix classes which are useful in complementarity theory exist \cite{cottle1992}. The following result is central in complementarity theory. 
\begin{thm}
The $\lcp(q,M)$ has a unique solution for any $q \in \R^n$ if $M\in \R^{n \times n}$ is a P-matrix. 
\label{theoremLCP}
\end{thm}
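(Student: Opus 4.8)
The plan is to reformulate the $\lcp(q,M)$ as a zero-finding problem for a piecewise-linear map and to split the statement into its uniqueness and existence parts, dispatching uniqueness by an elementary sign argument and reserving the real work for existence, which I would handle by a degree-theoretic (equivalently, constructive pivoting) argument.

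First I would record the sign-reversal characterization of P-matrices, which is the engine behind the whole result: $M$ is a P-matrix if and only if the only $x \in \R^n$ satisfying $x_i (Mx)_i \le 0$ for every $i$ is $x = 0$. This equivalence can be proved by induction on $n$ through Schur complements, since a principal minor of $M$ factors as a diagonal pivot times a principal minor of the Schur complement, so the P-property is inherited under the reduction. With this characterization available, uniqueness is immediate. If $z^{1}$ and $z^{2}$ are two solutions with slacks $w^{j} = M z^{j} + q \ge 0$, put $x = z^{1} - z^{2}$, so that $Mx = w^{1} - w^{2}$; using $z^{j}_i w^{j}_i = 0$ together with the nonnegativity of all four quantities, one computes
\[
x_i (Mx)_i = (z^{1}_i - z^{2}_i)(w^{1}_i - w^{2}_i) = -\,z^{1}_i w^{2}_i - z^{2}_i w^{1}_i \le 0
\]
for each $i$, whence $x = 0$ by the characterization, i.e. $z^{1} = z^{2}$.

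For existence I would pass to the \emph{normal map} $F_M(x) := M x^{+} - x^{-}$, where $x^{+}$ and $x^{-}$ are the componentwise positive and negative parts, so $x = x^{+} - x^{-}$. Setting $z = x^{+}$, a solution of $\lcp(q,M)$ corresponds exactly to a point with $x^{+} = z$ and $x^{-} = Mz + q$, and hence solving the LCP is equivalent to solving $F_M(x) = -q$. The map $F_M$ is positively homogeneous and piecewise linear, being linear on each orthant; on the orthant where the coordinates indexed by $\alpha$ are nonnegative, the linear selection has block form $\bigl(\begin{smallmatrix} M_{\alpha\alpha} & 0 \\ M_{\bar\alpha\alpha} & I \end{smallmatrix}\bigr)$ and therefore Jacobian determinant equal to the principal minor $\det M_{\alpha\alpha}$. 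Since $M$ is a P-matrix these are all strictly positive, so every selection is nonsingular and orientation-preserving. Moreover $F_M^{-1}(0) = \{0\}$ (this is precisely the uniqueness statement applied to $q = 0$), so by positive homogeneity the coercivity bound $\norm{F_M(x)} \ge c\,\norm{x}$ holds for some $c > 0$, making $F_M$ proper. I would then invoke the fact that a proper, coherently oriented piecewise-linear map is a global homeomorphism: its Brouwer degree at a regular value equals the common sign $+1$ of the piecewise determinants, hence is nonzero, forcing surjectivity, while injectivity follows from the uniqueness already shown. Bijectivity of $F_M$ delivers existence of an LCP solution for every $q$.

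The main obstacle is exactly this final topological step — upgrading the coherent positivity of the principal minors from local nonsingularity of each linear piece to a global homeomorphism. The degree computation has to be set up with care across the non-smooth gluing faces between orthants, and properness must be extracted cleanly from the P-matrix hypothesis. An alternative that sidesteps degree theory is to run Lemke's complementary pivoting scheme and prove it terminates at a solution rather than on a secondary ray, precisely because $M$ is a P-matrix; this is constructive but carries its own ray-termination analysis. Either route concentrates all the difficulty in the existence half, the uniqueness half being essentially free once the sign-reversal characterization is in hand.
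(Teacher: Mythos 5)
The paper itself offers no proof of this theorem: it is stated in Appendix~\ref{app:basicsComp} as a classical fact of complementarity theory, with the monographs of Cottle--Pang--Stone \cite{cottle1992} and Facchinei--Pang \cite{FaccPang03} cited as the sources (there it appears as the characterization that $M$ is a P-matrix if and only if $\lcp(q,M)$ has a unique solution for every $q$). So your proposal can only be measured against the standard literature proofs, and by that standard it is essentially faithful. The uniqueness half, via the Fiedler--Pt\'ak sign-reversal characterization of P-matrices, is exactly the textbook argument, and your computation $x_i(Mx)_i=-z^1_iw^2_i-z^2_iw^1_i\le 0$ is correct. The existence half --- the normal map $F_M(x)=Mx^+-x^-$, whose linear selections have determinants $\det M_{\alpha\alpha}>0$, properness deduced from $F_M^{-1}(0)=\{0\}$ and positive homogeneity, then a degree argument --- is the route taken in Facchinei--Pang; Cottle--Pang--Stone instead obtain existence constructively through Lemke's pivoting scheme, which is precisely the alternative you flag. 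Each route buys what you say it buys: degree theory is shorter but non-constructive, Lemke gives an algorithm at the price of the ray-termination analysis.

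One correction is needed, though it does not sink the proof. The black-box ``fact'' you invoke --- that a proper, coherently oriented piecewise-linear map is a global homeomorphism --- is false as stated. Counterexample in the plane: subdivide $\R^2$ into eight $45^\circ$ sectors and map the $k$-th sector linearly onto the $90^\circ$ sector from $90k$ to $90(k+1)$ degrees (mod $360^\circ$), matching images on common boundary rays; this angle-doubling map (a piecewise-linear version of $z\mapsto z^2/|z|$) is positively homogeneous, proper, and all eight selection determinants are positive, yet it is two-to-one away from the origin. What coherent orientation plus properness genuinely give is surjectivity: the degree is well defined, and choosing a regular value in the open image of the interior of one orthant, every preimage point contributes $+1$, so the degree is at least $1$ and hence nonzero. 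Injectivity, by contrast, \emph{must} come from elsewhere --- in your case from the LCP uniqueness already established. So your argument closes only in the order your gloss actually runs it (degree $\Rightarrow$ onto, sign-reversal $\Rightarrow$ one-to-one), not by citing a homeomorphism theorem for coherently oriented PL maps. (For normal maps specifically, coherent orientation is in fact equivalent to the homeomorphism property, by a theorem of Robinson, but that finer result is not needed once injectivity is supplied separately.)
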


For the analysis carried out in this paper, it is important to know how the solution of an $\lcp$, or $\lccp$ in general, changes if we modify one of the parameters.

\begin{prop}\label{prop:solScaleLcp}
Given a closed convex cone $K$ and a P-matrix $M$, let $\eta$ denote the solution of $\lccp(q,M,K)$ and $\eta_\alpha$ denote the solution of $\lccp(\alpha q, M,K)$, for some $\alpha > 0$. Then, it holds that $\eta_\alpha =\alpha \eta$.
\end{prop}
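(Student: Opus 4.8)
The plan is to use the fact that the three conditions defining a solution of the cone-complementarity problem are homogeneous under positive scaling, and then to conclude by a uniqueness argument. Recall that, by Definition~\ref{def:compProb}, the vector $\eta_\alpha$ solves $\lccp(\alpha q, M, K)$ precisely when the following three statements hold: (i) $\eta_\alpha \in K^\star$; (ii) $M\eta_\alpha + \alpha q \in K$; and (iii) $\eta_\alpha^\top(M\eta_\alpha + \alpha q) = 0$. First I would take the given solution $\eta$ of $\lccp(q, M, K)$, form the candidate $\alpha\eta$, and verify that it satisfies each of these three conditions; then invoking uniqueness identifies it as $\eta_\alpha$.

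For the verification, I would check the three conditions in turn, each reducing to a one-line scaling computation. For (i), since $\eta \in K^\star$ and $K^\star$ is a cone, $\alpha\eta \in K^\star$ for every $\alpha > 0$. For (ii), one writes $M(\alpha\eta) + \alpha q = \alpha(M\eta + q)$, which lies in $K$ because $M\eta + q \in K$ and $K$ is a cone closed under multiplication by the positive scalar $\alpha$. For (iii), the orthogonality relation scales quadratically, namely $(\alpha\eta)^\top\big(M(\alpha\eta) + \alpha q\big) = \alpha^2\,\eta^\top(M\eta + q) = 0$, using the complementarity of the original solution. Hence $\alpha\eta$ is a solution of $\lccp(\alpha q, M, K)$.

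The last step is to argue that this solution is \emph{the} solution, i.e.\ that $\lccp(\alpha q, M, K)$ admits no other solution, so that $\eta_\alpha = \alpha\eta$. I expect this uniqueness to be the only delicate point. Theorem~\ref{theoremLCP} is stated for the standard $\lcp$ over the nonnegative orthant, so for a general closed convex cone $K$ I would appeal to the corresponding uniqueness statement for $\lccp$ under the P-matrix hypothesis (equivalently, to the well-posedness already built into the complementarity relation in the paper's setting). Once uniqueness is in hand, the identity $\eta_\alpha = \alpha\eta$ follows immediately from the verification above.
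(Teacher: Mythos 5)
Your proposal is correct and follows essentially the same route as the paper's own proof: verify that $\alpha\eta$ satisfies the three defining conditions of $\lccp(\alpha q, M, K)$ by positive homogeneity of the cone constraints and quadratic scaling of the complementarity relation, then conclude by uniqueness. The paper likewise asserts uniqueness of the $\lccp$ solution without further elaboration, so your flagged concern about extending Theorem~\ref{theoremLCP} from $\lcp$ to $\lccp$ is treated at the same level of detail there as in your attempt.
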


\begin{proof}
Let  $\eta \in \lccp(q,M, K)$. Clearly, for each $\alpha > 0$,
\begin{align*}
\eta\in K^\star & \Leftrightarrow \ \alpha \eta \in  K^\star\\
M\eta+q \in K & \Leftrightarrow \  \alpha (M \eta +q) = M(\alpha\eta) + (\alpha q)\in K \\
\eta^\top (q + M \eta) =0 & \Leftrightarrow \  (\alpha \eta)^\top (M(\alpha\eta) + (\alpha q)) = 0.
\end{align*}
and hence $\alpha \eta \in \lccp(\alpha q, M,K)$. Since the solution to such an $\lccp$ are unique, it follows that $\eta_\alpha = \alpha \eta$.
\end{proof}

The next statement concerns also the sensitivity of the solution of an $\lccp$ with respect to one of its parameters. The results given in \cite[Section~2]{MangShia87} and \cite{Pang87} focus on Lipschitz continuity of the solution to $\lcp$ problems, and they can be modified to get the following statement:
\begin{prop}\label{prop:boundEta}
Consider system \eqref{eq:compSys} under Assumption \ref{lip}. Let $(x,\eta): [0,\infty) \to \R^{2n}$ denote the solution with an admissible initial condition $x(0) \in K$. Then, there exists a constant $C > 0$ such that for each $t \ge 0$,
\begin{equation}\label{eq:boundEta}
\| \eta(t) \| \le C \| f(x(t)) \| .
\end{equation}
\end{prop}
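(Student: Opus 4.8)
The plan is to reduce the pointwise estimate \eqref{eq:boundEta} to the nonexpansiveness of a metric projection. Fix $t \ge 0$ and write $x = x(t)$, $\eta = \eta(t)$. If $x \in \inn(K)$ then $\cN_K(x) = \{0\}$, so $\eta = 0$ and \eqref{eq:boundEta} holds trivially for any $C > 0$. It therefore suffices to treat the case $x \in \bd(K)$ with the constraint active, where $\eta \in \lccp(f(x), I, \cT_K(x))$; by \eqref{eq:LCCPTangent} this means precisely that
\[
\eta \in \cT_K(x)^\star, \quad f(x) + \eta \in \cT_K(x), \quad \inprod{\eta}{f(x) + \eta} = 0.
\]

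The key step is to recognize this as a Moreau decomposition of $f(x)$. Since $K$ is a closed convex cone, the tangent cone $\cT_K(x)$ and the normal cone $\cN_K(x)$ are mutually polar, with $\cN_K(x) = -\cT_K(x)^\star$. Setting $w := f(x) + \eta = \dot x$, the three relations above read $w \in \cT_K(x)$, $f(x) - w = -\eta \in \cN_K(x)$, and $\inprod{w}{f(x) - w} = 0$. By the uniqueness in Moreau's decomposition theorem applied to the polar pair $(\cT_K(x), \cN_K(x))$, this forces
\[
w = \proj_{\cT_K(x)}(f(x)), \qquad \eta = -\proj_{\cN_K(x)}(f(x)).
\]

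Finally, I would invoke the fact that the metric projection onto a closed convex cone is nonexpansive and fixes the origin, so that
\[
\norm{\eta} = \norm{\proj_{\cN_K(x)}(f(x)) - \proj_{\cN_K(x)}(0)} \le \norm{f(x) - 0} = \norm{f(x)},
\]
which establishes \eqref{eq:boundEta} with the explicit constant $C = 1$. An alternative route, closer to the references \cite{MangShia87, Pang87} cited just above the statement, is to use the Lipschitz dependence of the solution of the $\lccp$ on its constant term together with the identity $\eta = 0$ whenever $f(x) = 0$ (which follows from uniqueness, since $M = I$ is a P-matrix); for polyhedral $K$ there are only finitely many distinct tangent cones $\cT_K(x)$, so the Lipschitz constants can be taken uniform in $x$, again yielding a single $C$.

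The main obstacle is the middle step: rigorously identifying the complementarity solution with the pair of Moreau projections, which rests on the polarity $\cN_K(x) = -\cT_K(x)^\star$ and on the orthogonality condition $\inprod{\eta}{f(x)+\eta} = 0$ pinning down the unique decomposition. Once this identification is in place the estimate is immediate; the remaining care is merely bookkeeping over the two cases $x \in \inn(K)$ and $x \in \bd(K)$, together with the observation that the whole argument is pointwise in $t$, so that no regularity of the map $t \mapsto \eta(t)$ is needed.
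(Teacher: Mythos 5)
Your proof is correct, but it is not the paper's argument: the paper in fact gives no self-contained proof of Proposition~\ref{prop:boundEta} at all, asserting only that the Lipschitz-continuity results for LCP solutions in \cite{MangShia87} and \cite{Pang87} ``can be modified'' to yield the bound. Your route via Moreau's decomposition is genuinely different and, in this setting, arguably stronger. The identification is sound: the inclusion $\dot x \in f(x)-\cN_K(x)$ (equivalently, the membership required in the definition of $\lccp(f(x),I,\cT_K(x))$ in Appendix~\ref{app:basicsComp}) gives $\eta \in -\cN_K(x) = \cT_K(x)^\star$, so the three complementarity relations exhibit $f(x) = (f(x)+\eta) + (-\eta)$ as an orthogonal splitting into $\cT_K(x)$ and its polar $\cN_K(x)$; Moreau's theorem then pins down $\eta = -\proj_{\cN_K(x)}(f(x))$, and nonexpansiveness of the projection (which fixes $0$) yields $\|\eta(t)\| \le \|f(x(t))\|$. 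Compared with the paper's citation-based route, your argument buys: (i) an explicit sharp constant $C=1$; (ii) validity for an arbitrary closed convex cone $K$, whereas LCP sensitivity theory---and your own fallback sketch---implicitly relies on polyhedrality (finitely many distinct tangent cones) to extract a uniform constant, even though Assumption~\ref{lip} only requires $K$ closed and convex; and (iii) the by-product identity $\dot x = \proj_{\cT_K(x)}(f(x))$, the classical link between \eqref{eq:compSys} and projected dynamical systems. Two minor points of care: the $K^\star$ written in \eqref{eq:LCCPTangent} is weaker than the membership $\eta \in \cT_K(x)^\star$ that your Moreau step needs, so you are right to take the latter (it is what both the differential inclusion and the $\lccp$ definition actually provide); and the pointwise orthogonality $\langle \eta, f(x)+\eta\rangle = 0$ is part of the paper's convention defining $\eta(t)$ at every $t$---if one only knew $\eta = \dot x - f(x) \in -\cN_K(x)$ almost everywhere, the same bound would still hold for a.e.\ $t$, since $\pm\dot x(t) \in \cT_K(x(t))$ wherever the derivative exists, forcing $\langle \eta,\dot x\rangle = 0$ there.
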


\section{Proof of Lemma~\ref{lem:lipCont}}\label{app:globLip}
For $f$ locally Lipschitz in \eqref{eq:compSys}, there exists a continuous positive definite function $\beta:\R^n \to \R_{+}$, such that $\beta(x)f(x)$ is globally Lipschitz on $\R^n \setminus \{0\}$, \cite[Lemma~4.10]{ClarLedy98}. Set $\wh f(x) := \beta(x)f(x)$ in \eqref{eq:compSysLip}. We first prove the second item: if $\wh V$ is a continuously differentiable Lyapunov function for \eqref{eq:compSysLip}, then there exists a class $\cK$ function $\wh \gamma$ such that $\langle \nabla \wh V, \beta(x)f(x)\rangle \le -\wh \gamma(\| x \|)$ for $x \in \inn(K)$ and $\langle \nabla \wh V, \beta(x)f(x) + \eta \rangle \le - \wh \gamma(\| x \|)$ for $x \in \bd(K)$. By choosing a class $\cK$ function $\gamma$ such that $\gamma(\| x \|) < \frac{1}{\beta(x)}\wh \gamma(\| x \|)$, and using Proposition~\ref{prop:solScaleLcp}, it follows that $V = \wh V$ is a continuously differentiable Lyapunov function for \eqref{eq:compSys}.

To prove the first item, we need to show that the origin of \eqref{eq:compSysLip} is globally exponentially stable. Let $z \in [0,\infty)$ be a solution to \eqref{eq:compSysLip}, and let $\rho(t) = \int_0^t \beta(z(s)) \, ds$. Using Proposition~\ref{prop:solScaleLcp} and the chain rule for differentiation, it follows that $x(t) = z(\rho^{-1}(t))$ is a solution of \eqref{eq:compSys}. Thus, for every solution $z$ of \eqref{eq:compSysLip}, there exists a solution $x$ of \eqref{eq:compSys} such that $z(t) = x(\rho(t))$. Lyapunov stability of the origin of \eqref{eq:compSysLip} thus follows by inspection. Suppose that there exists a solution $\overline z$ such that $\overline z(t)$ does not converge to the origin as $t \to \infty$, then $\lim_{t\to \infty}\rho(t) = +\infty$. Let $\overline x$ be a solution to $\eqref{eq:compSys}$ such that $\overline z(t) = \overline x (\rho(t))$ and since \eqref{eq:compSys} is asymptotically stable, we have $\lim_{t\to\infty} \overline x (\rho(t)) = 0$, which is a contradiction. Hence, $\overline z$ converges to the origin as well.

\section{Homogeneous Lyapunov Function}\label{app:proofHomo}

\begin{proof}[\textbf{Proof of Proposition~\ref{prop:homogLyapfct}}]
The key ingredient required for applying the construction of \cite{Ros92} is to show that if $f$ is homogenous of degree $d \ge 1$, then
\[
\lccp(f(\lambda x), I, \cT_K(\lambda x)) = \lambda^{d} \lccp(f(x), I, \cT_K(x)),
\]
that is the nonsmooth multiplier $\eta$ respects the same homogeneity as the function $f(\cdot)$. This indeed follows from Proposition~\ref{prop:solScaleLcp} given in Appendix~\ref{app:basicsComp}.

The function $\overline{W}$ is well defined since we have $W(x) \rightarrow +\infty$ as $\| x \| \rightarrow +\infty$ and vanishes at $0$. Besides, we can find two numbers $\underline a > 0$ and $\overline a > 0$ such that
$W(\lambda x) \le 1$, for $\| x \| \in \left[0.5,2\right]$, $\lambda \le \underline a$, and $W(\lambda x) \ge 2$, for $\| x \| \in \left[0.5,2\right]$, $\lambda \ge \overline a$.
Then, for all $x \in \R^n$ satisfying $\| x \| \in \left[0.5,2\right]$, we have
\[
\overline W(x)= \int_{0}^{\infty} \frac{1}{\lambda^{k+1}} (a \circ W)(\lambda x)  \, d\lambda + \frac{1}{k \overline a^k}.
\]
It is obvious that $\overline {W}$ is $\cC^1$ on the set $\left\{x \vert\ \| x \| \in \left[\frac{1}{2},2\right]\right\}$. So we have
\[ 
\frac{\partial\overline{W}}{\partial x_i}(x)= \int_{0}^{\infty} \frac{\lambda}{\lambda^{k+1}} \nabla a(W(\lambda x)).\frac{\partial W}{\partial x_i}(\lambda x) \, d\lambda.
\]
By the homogeneity of $f$ and since $\eta$ satisfies $\eta_{\lambda x}=\lambda^{d}\eta_x$, we obtain
\begin{equation} 
\left\langle \nabla \overline W(x), f(x) + \eta_x \right\rangle = \int_{0}^{\infty} \frac{1}{\lambda^{d+k+1}} \nabla a(W(\lambda x)) \left \langle \nabla W(\lambda x), f(\lambda x) + \eta_{\lambda x} \right\rangle \, d\lambda.
\end{equation}
Since $\nabla a(s) > 0$ for some $s \in (1,2)$ and $W$ is a Lyapunov function then, for $\frac{1}{2} < \| x \| < 2$, the right-hand side is negative.

Homogeneity of $\overline W$ follows by a change of variable of integration. Therefore, we get $\overline W$ is $\cC^1$ on $\R^n\backslash\lbrace{0}\rbrace$ and cone-copositive Lyapunov function with respect to $K$.
\end{proof}

\section{Discretization algorithm}\label{app:algoDisc}

A pseudocode which allows us to compute Lyapunov function based on discretization of simplices is given in Algorithm~\ref{alg:Disc}.

\begin{algorithm}[!h]\caption{Discretization method in $\R_+^n$}\label{alg:Disc}
\small
\textbf{Input:} vector field $f$, maximum degree $d_{\max}$ (resp. $r_{\max}$) of the numerator (resp. denominator) of Lyapunov function, minimum diameter of the simplical partition $\epsilon$.\\
\textbf{Output:} {either a copositive Lyapunov function $V$, or an error message.} \\
$\Delta^S \longleftarrow \{x \in \R_+^n \, \vert \,  \|x\|_{1}=1\}$\\
$\delta$ $\longleftarrow$ $1$ \\
\While {$\delta > \epsilon$}{ 
$Q^\ell$ $\longleftarrow$ {vertices of simplex $\Delta^\ell$ of a simplicial partition $\{\Delta^1,\dots,\Delta^m \}$ of $\Delta^S$ with diameter $\delta$} \\
\ForAll {$r=0,1,2,\dots,r_{max}$}{
\ForAll {$d=1,2,\dots,d_{max}$}{
\ForAll {$\ell=1,2,\dots,m$}{
$h$ $\longleftarrow$ homogeneous polynomial of degree $d$ and $n$ variables with unknown coefficients\\
\ForAll {$i=1,2,\dots, \vert Q^\ell \vert$}{
	$x_i$ $\longleftarrow$ $v_i \in Q^\ell$\\
	$\eta_{x_i}$ $\longleftarrow$ $\lccp(f(x_i), I, \cT_{\R^n_+}(x_i))$ \\
	$s_k(x_i)$ $\leftarrow$ $- \| x_i \|_{2}^2 \left\langle \nabla h(x_i), f(x_i)+ \eta_{x_i} \right\rangle +$ \\
	$2rh(x_i) \left\langle x_i, f(x_i)+ \eta_{x_i} \right\rangle$, $k = 0,\dots,n$
	}
\ForAll {$j = 1,2,\dots, {\vert Q^\ell\vert \choose d} $}{	
$Q^\ell_j$ $\longleftarrow$ $j^{th}$ combination of $d$ vertices in $Q^\ell$ \\
Solve the LP problem in the coefficients of $h$ corresponding to the constraints $H[q_1,\dots,q_d] \ge 0$
and $S_k[q_1,\dots,q_d] \ge 0$ where $H,S_k$ denote the 
tensors of $h,s_k$ and $\left\{q_1,\dots,q_d\right\} \in Q^\ell_j$, $k = 0, \dots, n$ \\
\If {the LP problem is feasible} {
return $V(x) = \frac{h(x)}{\| x\|_2^{2r}}$ \\
}
}
}
}
}
$\delta \longleftarrow \frac{\delta}{2}$
}
display(``Lyapunov function not found'')
\end{algorithm}


\section{Sum-of-Squares Algorithm}\label{app:algoSOS}

The pseudocode based on SOS decomposition is given in Algorithm~\ref{alg:SOS} given below. In addition to the procedure outlined in Section~\ref{sec:sosMethod}, we use the YALMIP command {\tt solvesos} to model and solve the SOS optimization problem: It computes the unknown coefficients $h_i$ that we associate with the polynomial $h \in \R^q[x]$, while minimizing $\sum h_i^2$, under the constraint that $P_h^{(d)}(x)$, $P_{s_k}^{(d)}(x)$, $k = 0, \dots, n$ must be SOS for some $d \in \Nz$.


\begin{algorithm}[!h]\caption{SOS Approximations of Lyapunov Functions}\label{alg:SOS}
\small
\textbf{Input:} vector field $f$, maximum degree $q_{\max}$ (resp. $r_{\max}$) of the numerator (resp. denominator) of Lyapunov function, maximum degree $d_{\max}$  for expressing homogeneous polynomials\\
\textbf{Output:} either a copositive Lyapunov function $V$, or an error message. \\
\ForAll{$r=1,2,...,r_{\max}$}{
\ForAll{$q=1,2,...,q_{\max}$}{
   1. \enspace $h$ $\longleftarrow$ homogeneous polynomial of degree $q$ and $n$ variables with unknown coefficients $h_i$\\
   \hspace{0.4cm} $s_0(x)=-\|x\|_{2}^2 \left\langle \nabla h(x), f(x) \right\rangle +2rh(x) \left\langle x, f(x) \right\rangle$\\ 
   \ForAll{$k=1,2,...,n$}{
   $\eta_{k}(x)$  $\longleftarrow$ $\lccp(f(x), I, \cT_{\R^n_+}(x))$, for $x \in S_k$\\
   $s_k(x) = -\|x\|_{2}^2 \left\langle \nabla h(x), f(x)+ \eta_k \right\rangle $ \\
   \hspace{2cm} $ +2rh(x) \left\langle x, f(x)+ \eta_k \right\rangle$\\
   }
   2.  $P_{h}(y) \longleftarrow h(y^2)$\\
      \enspace \enspace $P_{s_k}(y) \longleftarrow s_{k}(y^2)$, $k = \{0, \dots, n\}$\\
   3. \ForAll {$d=0,...,d_{\max}$}{
     $P_{h}^{(d)}(y) \longleftarrow \|y\|^{2d}P_{h}(y)$\\
     $P_{s_k}^{(d)}(y)\longleftarrow \|y\|^{2d}P_{s_{k}}(y)$, $k = \{0, \dots, n\}$\\
     {\sc solvesos} $\Big(\textrm{\sc sos}(P_{h}^{(d)}), \textrm{\sc sos}(P_{s_0}^{(d)}), \textrm{\sc sos}(P_{s_k}^{(d)}), $\\ \hspace{1.5cm} $ \sum_{i}h_i^2, [~], [h_i] \Big)$
     
   4. \If {the SOS program is feasible} {
       return $V(x) = \frac{h(x)}{\| x\|_2^{2r}}$
   }
   }
}
}
display(``Lyapunov function not found'')
\end{algorithm}

\bibliographystyle{plain}

\end{document}